\numberwithin{equation}{section}
\newtheorem{theorem}{Theorem}[section]
\newtheorem{proposition}[theorem]{Proposition}
\newtheorem{corollary}[theorem]{Corollary}
\theoremstyle{definition}
\newtheorem{definition}[theorem]{Definition}
\theoremstyle{remark}
\newtheorem{remark}[theorem]{Remark}
\newcommand{\ord}{\mathrm{ord}}
\newcommand{\mult}{\mathrm{mult}}
\DeclareMathOperator{\area}{\mathrm{Area}}
\DeclareMathOperator{\ind}{\mathrm{ind}}
\DeclareMathOperator{\nul}{\mathrm{nul}}
\DeclareMathOperator{\Hom}{\mathrm{Hom}}
\begin{document}

\title[Yang-Yau inequality]{On the Yang-Yau inequality for the first Laplace eigenvalue}
\begin{abstract}
In a seminal paper published in 1980, P.~C.~Yang and S.-T.~Yau proved an inequality bounding the first eigenvalue of the Laplacian on an orientable Riemannian surface in terms of its genus $\gamma$ and the area. The equality in Yang-Yau's estimate is attained for $\gamma=0$ by an old result of J.~Hersch and it was recently shown by S.~Nayatani and T.~Shoda that it is also attained for $\gamma=2$. In the present article we combine techniques from algebraic geometry and minimal surface theory to show that Yang-Yau's inequality is strict for all genera $\gamma >2$. Previously this was only known for $\gamma=1$. In the second part of the paper we apply Chern-Wolfson's notion of harmonic sequence to obtain an upper bound on the total branching order of harmonic maps from surfaces to spheres. Applications of these results to extremal metrics for eigenvalues are discussed. 
\end{abstract}
\author[M. Karpukhin]{Mikhail Karpukhin}
\address{Department of Mathematics,
University of California, Irvine, 340 Rowland Hall, Irvine, CA 92697-3875
}
\email{mkarpukh@uci.edu}
\date{}
\maketitle

\section{Introduction}

\subsection{Yang-Yau inequality}
Let $(M,g)$ be a closed Riemannian surface. The associated Laplace-Beltrami operator $\Delta_g$ on the space of smooth functions is defined as $\Delta_g = \delta\circ d$, where $d\colon C^\infty(M)\to\Omega^1(M)$ is the differential and $\delta$ is its formal adjoint.  
As an unbounded operator on $L^2(M)$, the operator $\Delta_g$ has discrete non-negative spectrum
$$
0 = \lambda_0(M,g) < \lambda_1(M,g) \leqslant \lambda_2(M,g) \leqslant \lambda_3(M,g) \leqslant \ldots,
$$
where eigenvalues are written with multiplicities.

Let us consider the normalized eigenvalues 
$$
\bar\lambda_k(M,g) = \lambda_k(M,g)\area_g(M).
$$
Then for any positive number $t>0$ one has $\bar\lambda_k(M,tg) = \bar\lambda_k(M,g)$. Furthermore, one defines
$$
\Lambda_k(M,g) = \sup_g\bar\lambda_k(M,g),
$$
where the supremum is taken over all possible Riemannian metrics on $M$. One of the fundamental problems in spectral geometry is to determine the value of $\Lambda_k(M)$ and to find the metrics $g$ for which this value is attained. Let us review the current progress on this problem. For a more detailed survey, see the introduction to the paper~\cite{KNPP}.

In the paper~\cite{YY} Yang and Yau proved that for any orientable surface $M$ of genus $\gamma$ one has
$$
\Lambda_1(M)\leqslant 8\pi(\gamma + 1).
$$
However, it was remarked in~\cite{ESI_YY} that the same proof yields the following stronger inequality,
\begin{equation}
\label{YYineq}
\Lambda_1(M)\leqslant 8\pi\left[\frac{\gamma+3}{2}\right],
\end{equation}
where $ [x]$ denotes the integer part of $x$.
In the following we refer to~\eqref{YYineq} as Yang-Yau inequality. We note that an alternative proof of~\eqref{YYineq} using the concept of conformal volume was given by Li and Yau in~\cite{LiYau}.

Prior to the paper~\cite{YY}, it was known to Hersch~\cite{Hersch} that the equality in inequality~\eqref{YYineq} is attained for $\gamma=0$.
Later, Nadirashvili proved in~\cite{Nadirashvili_torus} that $\Lambda_1(\mathbb{T}^2) = \frac{8\pi^2}{\sqrt{3}}$. His proof implicitly relies on the fact that the inequality~\eqref{YYineq} is strict for $\gamma=1$, see~\cite{CKM}. It was conjectured in~\cite{JLNNP} that the equality in~\eqref{YYineq} is attained for $\gamma=2$. Very recently it was proved by Nayatani and Shoda in the paper~\cite{NayaSho}. In the first result of the present paper we prove that $\gamma=0$  and $\gamma=2$ are the only possible values of $\gamma$ for which the equality in inequality~\eqref{YYineq} can be attained.

\begin{theorem}
\label{YYth}
For any orientable surface $M$ of genus $\gamma>2$ one has
$$
\Lambda_1(M)< 8\pi\left[\frac{\gamma+3}{2}\right].
$$
\end{theorem}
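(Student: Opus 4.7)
The plan is to argue by contradiction: suppose that $\gamma>2$ and that $\Lambda_1(M)=8\pi d$, where $d=\left[\frac{\gamma+3}{2}\right]$, and derive an incompatibility from a careful analysis of the equality case of~\eqref{YYineq}.

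First, I would unpack the equality case of Yang--Yau's proof. That proof selects a meromorphic function $\phi\colon M\to\mathbb{CP}^1$ of degree at most $d$ (the generic gonality of a genus-$\gamma$ curve, guaranteed by Brill--Noether), applies Hersch's conformal balancing so that the three components of $\phi$, viewed via $S^2\subset\mathbb R^3$, are $L^2$-orthogonal to the constants, and then tests the Rayleigh quotient against these components. Chasing the equalities, saturation of~\eqref{YYineq} forces $\deg\phi=d$ exactly and forces each coordinate $\phi^i$ to be an eigenfunction of the ambient metric with eigenvalue $\lambda_1$. The latter condition identifies the extremal metric, up to scale, with the pullback $\phi^{\ast}g_{S^2}$ of the round metric, which carries conical singularities at the ramification divisor of $\phi$. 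If the supremum is not attained by a single metric of this form, one still extracts a limiting harmonic map carrying the same structural data via a bubble-tree/concentration analysis along any nearly-extremal sequence. Either way, one obtains a holomorphic branched covering $\phi\colon M\to S^2$ of degree exactly $d$, whose total branching is pinned by Riemann--Hurwitz at $B=2\gamma-2+2d$, a quantity that grows linearly in~$\gamma$.

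The next step is to invoke the main result of Part~II of the paper, which applies Chern--Wolfson's harmonic sequence to bound from above the total branching order of a harmonic map from a genus-$\gamma$ surface into a sphere. Applied to the harmonic map produced in Step~1, this upper bound collides with the Riemann--Hurwitz value $2\gamma-2+2d$ precisely when $\gamma>2$, yielding the desired strict inequality. The main obstacle I foresee is not the final numerical comparison but Step~1 itself: rigorously producing the limiting harmonic map with the correct target and degree in the non-attainment case. One must control mass loss to bubbles and verify that what remains on the fixed surface of genus $\gamma$ is a harmonic map of degree exactly $d$ into $S^2$ (rather than into a higher-dimensional sphere, or of strictly smaller degree), since only under these conditions does the Part~II branching bound engage sharply enough to meet the Riemann--Hurwitz value. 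Combining standard regularity for eigenvalue maximisers with careful tracking of degrees under bubbling --- where the Brill--Noether bound on gonality provides the algebraic-geometric input --- should resolve this, and constitutes the principal technical hurdle of the argument.
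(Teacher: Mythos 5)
Your Step~2 fails, and this failure is fatal to the whole argument. You want to apply the total-branching bound of Theorem~\ref{Branching_th} to the degree-$d_\gamma$ holomorphic map $\phi\colon M\to\mathbb S^2$ produced in Step~1 and have it ``collide'' with the Riemann--Hurwitz count $b=2\gamma-2+2d_\gamma$. But a holomorphic map to $\mathbb S^2$ is \emph{totally isotropic} with $m=1$, so the relevant estimate is~\eqref{branching_ineq2}, which in that case reads $b\leqslant\frac{1}{2\pi}E_g(\phi)-\chi(M)$. Since $E_g(\phi)=4\pi\deg\phi$, this is $b\leqslant 2\deg\phi+2\gamma-2$, i.e.\ it reproduces Riemann--Hurwitz \emph{with equality} and gives no contradiction whatsoever; the paper even notes this explicitly in the remark after Theorem~\ref{Branching_th}. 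The other branch of Theorem~\ref{Branching_th}, $b\leqslant-\frac12(n+1)\chi(M)$, does not apply because holomorphic maps to $\mathbb S^2$ are never non-totally-isotropic. So the branching bound, which is the engine of your argument, has no force against the map you construct.

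There is a further, conceptual divergence: the paper does not reason by pitting an upper branching bound against Riemann--Hurwitz at all. The equality case of~\eqref{equality:CKM} is exploited through a spectral consequence you omit: equality forces the \emph{index} of the Schr\"odinger operator $L_{\phi,g}=\Delta_g-|\nabla\phi|^2_g$ to equal $1$. By Theorem~\ref{uniquepencil} (Montiel--Ros), $\ind(\phi)=1$ implies $\mathcal V_\phi$ is the \emph{unique} pencil of minimal degree on $M$. For odd $\gamma$, Brill--Noether already rules this out. For even $\gamma>2$ the argument is more delicate: a result of Coppens (Proposition~\ref{AGprop}) shows that curves with a unique minimal pencil $|D|$ satisfy $h^0(2D)>3$; Riemann--Roch then yields a nonzero $\omega\in H^0(K-2P_\phi)$; the Weierstrass representation $(\phi,\omega)$ produces a branched multivalued minimal immersion into $\mathbb R^3$ with branching divisor $B=K-2P_\phi$; and the Ros-type index bound of Proposition~\ref{Immersionprop} gives $\ind(\phi)\geqslant\frac{2h^0(2P_\phi)-3}{3}>1$, contradiction. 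None of these ingredients --- index rigidity, uniqueness of pencils, Coppens' theorem, or the Ros index estimate for branched multivalued immersions --- appear in your outline, and the bubble-tree concerns you flag as the ``principal technical hurdle'' are not where the difficulty of this theorem actually lies.
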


The proof of Theorem~\ref{YYth} is inspired by an argument of Ros~\cite[Theorem 14]{Ros}. His results imply Theorem~\ref{YYth} in case $\gamma=4$. Our proof is an extension of his ideas and relies on a combination of algebraic techniques (geometry of special divisors) and minimal surface theory (index bounds for branched multivalued minimal immersions). 

\begin{remark}
In the paper~\cite{Karpukhin_nor} the author has shown an analog of inequality~\eqref{YYineq} for non-orientable surfaces. Namely, if $M$ is a non-orientable surface of genus $\gamma$, then 
\begin{equation*}
\Lambda_1(M)\leqslant 16\pi\left[\frac{\gamma+3}{2}\right].
\end{equation*}
The genus of a non-orientable surface is defined to be the genus of its orientable double cover. 
We remark that a slight modification of the argument in~\cite{Karpukhin_nor} yields the strict inequality
\begin{equation*}
\Lambda_1(M)< 16\pi\left[\frac{\gamma+3}{2}\right].
\end{equation*}

Indeed, the argument relies on the fact that for any metric $g$ on the disk $\mathbb{D}$ one has $\bar\lambda_1^N(\mathbb{D},g)\leqslant 8\pi$, where $\bar\lambda_1^N$ stands for the first normalized Neumann eigenvalue. However, according to~\cite[Theorem 1]{LiYau} the equality is possible only if $(\mathbb{D},g)$ admits an isometric minimal immersion by the first Neumann eigenfunctions. Let $F = (f_1,\ldots,f_n)$ be such an immersion. Since $F$ is an isometry, this implies that the length $|\nu|$ of the normal vector to $\partial \mathbb{D}$ satisfies $|\nu|^2 = \left(\sum df_i\otimes df_i\right)(\nu,\nu) = 0$, which is a contradiction. Therefore, the equality is impossible, i.e. $\bar\lambda_1^N(\mathbb{D},g)< 8\pi$.
\end{remark}
\begin{remark}
\label{Korevaar_remark}
In~\cite{Korevaar} (see also~\cite{Hassannezhad}) a version of the Yang-Yau inequality is proved for higher eigenvalues. Namely, it is shown that there exists a universal constant $C$ such that for all surfaces of genus $\gamma$ one has
$$
\Lambda_k(M,g)\leqslant Ck(\gamma+1).
$$
\end{remark}

\subsection{Connection to harmonic maps} One of the motivations for studying the extremal properties of functionals $\bar\lambda_k(M,g)$ is the connection to harmonic and minimal maps which we briefly recall below. First, let us set
$$
\Lambda_k(M,[g]) = \sup_{\tilde g\in [g]}\bar\lambda_k(M,\tilde g),
$$
where $[g]$ is a class of metrics conformal to $g$. For the purposes of our discussion it is convenient to allow $g$ to have conical singularities at isolated points of $M$. Therefore, we set $[g] = \{\tilde g|\,\tilde g = f^2g\}$, where $f$ ranges over non-negative smooth functions which are allowed to be equal to zero at isolated points.

The functional $\bar\lambda_i(M,g)$ depends continuously  on the metric $g$, but this functional is not differentiable. 
However, it is shown in the papers~\cite{BU, Berger, ElSoufiIlias1} that for an analytic family of metrics $g_t$ there exist the left and the right 
derivatives
with respect to $t$. This is a motivation for the following definition, see the papers~\cite{ElSoufiIlias1,Nadirashvili_torus}.

\begin{definition} A Riemannian metric $g$ on a closed surface $M$ is called {\em extremal for the functional} 
$\bar\lambda_i$ if for any analytic deformation $g_t$ such that $g_0 = g$ the following inequality holds,
$$
\frac{d}{dt}\bar\lambda_i(M,g_t)\Bigl|_{t=0+} \times \frac{d}{dt}\bar\lambda_i(M,g_t)\Bigl|_{t=0-}\leqslant 0.
$$

Similarly, $g$ is called {\em conformally extremal} if the same inequality holds for conformal deformations, i.e. for deformations satisfying $[g_t]=[g]$ for all $t$.
\end{definition}

Let $\Phi\colon (M,g)\to\mathbb{S}^n$ be a harmonic map and let $h = \frac{1}{2}|\nabla\Phi|_g^2\,g$ be a metric with isolated conical singularities in the conformal class of $g$,~i.e. $h\in [g]$. Here and everywhere below, the sphere $\mathbb{S}^n$ is considered to be the unit sphere in $\mathbb{R}^{n+1}$ endowed with the induced metric $g_{\mathbb{S}^n}$. The metric $h$ has conical singularities at zeroes of $d\Phi$, which are isolated by harmonicity of $\Phi$. If $\Phi$ is conformal then $h=\Phi^*g_{\mathbb{S}^n}$.
Let us introduce the Weyl's counting function
$$
N_h(\lambda) = \#\{i|\,\lambda_i(M,h)<\lambda\}.
$$

\begin{theorem}[Nadirashvili~\cite{Nadirashvili_torus}, El Soufi and Ilias,~\cite{ElSoufiIlias1}, see also~\cite{KNPP}] 
\label{extremal_th}
If $g$ is conformally extremal for the functional $\bar\lambda_i(M,g)$ then there exists a harmonic map $\Psi\colon (M,g)\to\mathbb{S}^n$ whose components are $\lambda_i$-eigenfunctions. If $g$ is extremal, then $\Psi$ can be chosen to be conformal.

Conversely, if $\Phi\colon(M,g)\to\mathbb{S}^n$ is a harmonic map to the unit sphere, then the metric $h = \frac{1}{2}|\nabla\Phi|_g^2\,g$ is conformally extremal for the functional $\bar\lambda_{N(2)}$. Furthermore, if $\Phi$ is conformal, i.e. if $\Phi\colon (M,h)\to\mathbb{S}^n$ is a branched minimal immersion, then $h$ is extremal for the functional $\bar\lambda_{N_h(2)}$.
\end{theorem}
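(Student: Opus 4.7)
\medskip

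\noindent\emph{Proof plan.} Both directions hinge on the first-variation formula for $\bar\lambda_i$ along analytic families of metrics, combined with a convex separation argument. For a conformal analytic family $g_t=e^{2t\phi}g$, conformal invariance of the Dirichlet energy in dimension two reduces the Rayleigh quotient to its $L^2$-term, and Rellich--Kato perturbation theory gives one-sided derivatives at $t=0$ equal to the maximum (respectively the minimum) over unit $\lambda_i$-eigenfunctions $u$ of
$$
F_u(\phi)\ :=\ 2\lambda_i\,\area_g(M)\!\int_M\!\bigl(\area_g(M)^{-1}-u^2\bigr)\phi\,dv_g,
$$
so the conformal extremality condition is equivalent to the constant function $\area_g(M)^{-1}$ lying in the convex hull $K\subset L^1(M)$ of $\{u^2:u\in E_{\lambda_i},\,\|u\|_{L^2(g)}=1\}$. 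For the direct implication, Carath\'eodory's theorem then produces unit eigenfunctions $u_0,\dots,u_n$ and positive weights $c_j$ with $\sum_j c_j u_j^2\equiv\area_g(M)^{-1}$; rescaling $u_j\mapsto\sqrt{c_j\,\area_g(M)}\,u_j$ gives $\sum u_j^2\equiv 1$, and then $\Delta_g(\sum u_j^2)=0$ together with $\Delta_g u_j=\lambda_i u_j$ forces $\sum|\nabla u_j|_g^2\equiv\lambda_i$. Hence $\Psi=(u_0,\dots,u_n)\colon M\to\mathbb{S}^n$ satisfies the harmonic-map equation $\Delta_g\Psi=|\nabla\Psi|_g^2\Psi$. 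For full extremality I would repeat the argument with trace-free perturbations of $g$; the additional obstruction is the vanishing of the holomorphic Hopf differential of $\Psi$, which is equivalent to $\Psi$ being weakly conformal and hence a branched minimal immersion.

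For the converse, given a harmonic map $\Phi\colon(M,g)\to\mathbb{S}^n$ set $h=\tfrac12|\nabla\Phi|_g^2\,g$. In dimension two $\Delta_h=(\tfrac12|\nabla\Phi|_g^2)^{-1}\Delta_g$, so $\Delta_g\Phi=|\nabla\Phi|_g^2\Phi$ becomes $\Delta_h\Phi=2\Phi$, showing that the components $\phi_j$ are $h$-eigenfunctions with eigenvalue $2$; in particular $\lambda_{N_h(2)}(M,h)=2$. Writing $\alpha_j:=\|\phi_j\|_{L^2(h)}^2$ and $\tilde\phi_j:=\phi_j/\sqrt{\alpha_j}$, the constraint $\sum_j\phi_j^2\equiv 1$ becomes $\sum_j(\alpha_j/\area_h(M))\,\tilde\phi_j^2\equiv\area_h(M)^{-1}$ with $\sum_j\alpha_j/\area_h(M)=1$ (by integrating over $M$), so this is an explicit convex combination witnessing $\area_h(M)^{-1}\in K_h$, which is exactly the conformal extremality of $\bar\lambda_{N_h(2)}$ at $h$ by the variational characterization above. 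If $\Phi$ is in addition conformal, then $h=\Phi^*g_{\mathbb{S}^n}$ has vanishing Hopf differential, and the same argument extended to trace-free deformations upgrades this to full extremality.

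The main obstacle is the convex-separation step together with the perturbation-theoretic bookkeeping: one must control the jumps in multiplicity of $\lambda_i$ under the analytic deformation, ensure the Carath\'eodory extraction is compatible with the quotient by additive constants in $\phi$ (which leave $\bar\lambda_i$ invariant at first order), and carefully handle the isolated zeroes of $d\Psi$ where $h$ acquires conical singularities; it is precisely for this last reason that the statement enlarges $[g]$ to include metrics with isolated conical points.
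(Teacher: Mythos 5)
The paper states this theorem as a known result, attributing it to Nadirashvili, El Soufi--Ilias, and the KNPP preprint, and gives no proof of its own. Your sketch correctly reproduces the standard argument from those references: the one-sided first-variation formula for $\bar\lambda_i$ under conformal deformations, the Hahn--Banach separation reducing conformal extremality to the condition that $\area_g(M)^{-1}$ lie in the convex hull of normalized squared eigenfunctions, Carath\'eodory to extract a finite family, the rescaling that gives $\sum u_j^2\equiv 1$ and hence $|\nabla\Psi|_g^2\equiv\lambda_i$ so that $\Psi$ solves the harmonic map equation, the trace-free deformations that force the Hopf differential to vanish when $g$ is fully extremal, and, for the converse, the computation $\Delta_h\Phi=2\Phi$ together with the explicit convex combination coming from $\sum\phi_j^2\equiv 1$. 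You have also correctly flagged the genuine technical subtleties (multiplicity jumps in the Kato analytic branches when $\lambda_{i-1}=\lambda_i$, and the passage to metrics with isolated conical singularities), which is exactly the bookkeeping that the cited papers carry out in full.
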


In particular, if there exists a metric realizing the quantities $\Lambda_k(M)$ or $\Lambda_k(M,[g])$, then this metric is extremal or conformally extremal respectively. Such metrics are called {\em maximal for} $\bar\lambda_k$ or {\em conformally maximal for} $\bar\lambda_k$ respectively. The existence of such metrics was studied in~\cite{Nadirashvili_torus, Petrides1, Petrides2}. For the sake of brevity we only state the following result.
\begin{theorem}[Petrides~\cite{Petrides1}]
\label{existence_th}
For any conformal class $[g]$ on a surface $M$ there exists a smooth metric $h$, possibly with isolated conical singularities, such that $\bar\lambda_1(M,h) = \Lambda_1(M,[g])$. As a result, there exists a harmonic map $\Phi\colon (M,g)\to\mathbb{S}^n$ such that $h=\frac{1}{2}|\nabla\Phi|^2_gg$. 
\end{theorem}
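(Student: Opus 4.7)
\medskip

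\noindent\emph{Proof plan.}
My approach would be a direct variational method. Fix a reference smooth metric $g_0\in[g]$ and take a maximizing sequence $g_n=e^{2u_n}g_0\in[g]$ normalized to $\area_{g_n}(M)=1$ with $\bar\lambda_1(M,g_n)\to\Lambda_1(M,[g])$; the goal is to extract a subsequential limit producing $h$.

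For each $n$, one picks an $L^2(g_n)$-orthonormal basis of $\lambda_1(g_n)$-eigenfunctions, assembles them into a map $M\to\mathbb{R}^{k_n+1}$, and applies a Hersch-type M\"obius renormalization to arrange $|\Phi_n|^2\equiv 1$, giving a harmonic map $\Phi_n:(M,g_n)\to\mathbb{S}^{k_n}$. The energy densities $e_n=\tfrac{1}{2}|\nabla\Phi_n|^2_{g_0}$ have total mass proportional to $\lambda_1(M,g_n)$ and are hence uniformly bounded. Up to a subsequence, $e_n\,dv_{g_0}$ converges weakly to a Radon measure $\nu$, and Lions' concentration-compactness lemma gives $\nu=\nu_{\mathrm{ac}}+\sum_{j=1}^N c_j\delta_{x_j}$, with finitely many concentration points $x_j\in M$ and masses $c_j\ge 4\pi$, the lower bound coming from the energy quantization for nonconstant harmonic maps $\mathbb{S}^2\to\mathbb{S}^k$.

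The main obstacle is to rule out bubbling. Rescaling $\Phi_n$ conformally near each $x_j$ produces a harmonic bubble sphere $\tilde\Phi_j:\mathbb{S}^2\to\mathbb{S}^k$, and a careful accounting of how the $\bar\lambda_1$-budget splits between the bubbles and the body, combined with Hersch's bound $\Lambda_1(\mathbb{S}^2)=8\pi$, shows that if any bubble forms then $\Lambda_1(M,[g])\le 8\pi$. Hence whenever $\Lambda_1(M,[g])>8\pi$ bubbling is excluded, the $\Phi_n$ converge smoothly off a finite set to a harmonic limit $\Phi:(M,g_0)\to\mathbb{S}^n$, and the desired metric is $h=\tfrac{1}{2}|\nabla\Phi|^2_{g_0}\,g_0\in[g]$, which is smooth away from the isolated zeros of $d\Phi$, where it acquires conical singularities. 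Lower semicontinuity of $\bar\lambda_1$ along the subsequence then forces $\bar\lambda_1(M,h)=\Lambda_1(M,[g])$.

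The degenerate case $\Lambda_1(M,[g])\le 8\pi$ is disposed of separately: when $M=\mathbb{S}^2$ with the round conformal class, Hersch's theorem exhibits the round metric itself as a maximizer, while for any other $(M,[g])$ an explicit test-map construction (using a handle or nontrivial genus) gives the strict gap $\Lambda_1(M,[g])>8\pi$, returning us to the main case. The harmonic-map representation of $h$ in the second sentence is then immediate from Theorem~\ref{extremal_th}. The truly hard step throughout is the quantitative bubbling analysis that pins the cost of each bubble at exactly $8\pi$, since it requires reconstructing $\bar\lambda_1$-eigenfunctions on the limit from eigenfunctions on $(M,g_n)$ after subtracting bubble contributions.
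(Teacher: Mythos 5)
The paper does not supply its own proof of this theorem; it is quoted verbatim from Petrides~\cite{Petrides1}, so there is nothing in the paper to compare against. Evaluating your proposal on its own terms: the overall architecture (maximizing sequence, concentration--compactness for the energy densities, energy quantization at $4\pi$ per bubble, ruling out concentration whenever $\Lambda_1(M,[g])>8\pi$, then handling the threshold case via Hersch) is indeed the right skeleton and matches the spirit of Petrides' argument, but there is a genuine gap at the very first constructive step.

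You write that for each $g_n$ one ``picks an $L^2(g_n)$-orthonormal basis of $\lambda_1(g_n)$-eigenfunctions, assembles them into a map $M\to\mathbb{R}^{k_n+1}$, and applies a Hersch-type M\"obius renormalization to arrange $|\Phi_n|^2\equiv 1$, giving a harmonic map $\Phi_n:(M,g_n)\to\mathbb{S}^{k_n}$.'' This step fails for a general maximizing sequence. The identity $\sum_i\phi_i^2\equiv 1$ for first eigenfunctions is \emph{equivalent} to the metric being conformally extremal (this is precisely the Nadirashvili / El Soufi--Ilias characterization behind Theorem~\ref{extremal_th}), so it cannot hold along an arbitrary maximizing sequence, and it is not something M\"obius transformations of the target can enforce: the Hersch trick post-composes a fixed \emph{sphere-valued} map with a conformal automorphism to make the coordinate functions have zero mean; it neither starts from a sphere-valued map nor produces the pointwise normalization $|\Phi_n|^2\equiv 1$. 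Petrides' actual argument has to replace this with a regularization/selection scheme (maximizing perturbed eigenvalue functionals, or working with a carefully chosen extremal sub-sequence) so that each approximant is genuinely (approximately) extremal and \emph{then} admits a sphere-valued harmonic map to feed into the bubbling analysis. Without that intermediate construction, there are no $\Phi_n$ and the concentration--compactness machine has nothing to run on.

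Two smaller points. First, ``lower semicontinuity of $\bar\lambda_1$ forces $\bar\lambda_1(M,h)=\Lambda_1$'' is stated in the wrong direction: what one needs is $\bar\lambda_1(M,h)\ge\limsup\bar\lambda_1(M,g_n)$, a form of \emph{upper} semicontinuity that in general fails under weak convergence of metrics; the entire content of the bubbling analysis is to recover it when concentration is excluded, so invoking ``semicontinuity'' as a closing step begs the main question. Second, the assertion that $\Lambda_1(M,[g])>8\pi$ whenever $(M,[g])$ is not the round sphere is itself a nontrivial theorem (also due to Petrides, building on Colbois--El Soufi), and ``an explicit test-map construction (using a handle or nontrivial genus)'' understates it considerably --- in particular it does not address $\mathbb{S}^2$ with a non-round conformal class, where there is only one conformal class so the issue is moot, nor $\mathbb{RP}^2$, where a separate argument is needed. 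It would be cleaner to cite the gap theorem explicitly rather than gesture at a test-map argument.
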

\begin{remark}
The existence result for higher eigenvalues requires an additional a priori condition that is not always satisfied, see~\cite{Petrides2} for details. For results concerning the values $\Lambda_k(M)$ for $k\geqslant 2$ we refer to~\cite{KRP2, KNPP, NP}.
\end{remark}

\subsection{Total branching order}
As we see from Theorems~\ref{extremal_th} and~\ref{existence_th}, metrics with conical singularities appear naturally in the study of extremal values of the functionals $\bar\lambda_k(M,g)$. However, one would expect that solutions of extremal problems have nice smoothness properties, in particular, one expects that extremal metrics should have few singular points.

The second result of the present article is an upper bound on the number of conical singularities. The most general version of the bound can be found in Section~\ref{HSS2}. The following theorem is less general but is easier to state.
\begin{theorem}
\label{Branching_th}
Let $\Phi\colon(M,g)\to\mathbb{S}^n$ be a linearly full harmonic map, i.e. its image is not contained in the sphere of smaller dimension. Let $b$ be a total number of branching points of $\Phi$ counted with multiplicity. 

If $\Phi$ is not totally isotropic then
\begin{equation}
\label{branching_ineq}
b\leqslant -\frac{1}{2}(n+1)\chi(M).
\end{equation}

If $\Phi$ is totally isotropic then $n=2m$ and
\begin{equation}
\label{branching_ineq2}
b\leqslant \frac{1}{2\pi m}E_g(\Phi) -\frac{1}{2}(m+1)\chi(M),
\end{equation}
where 
$$
E_g(\Phi) = \frac{1}{2}\int\limits_M |d\Phi^2|_g\,dV_g
$$
is the energy of the map $\Phi$.
\end{theorem}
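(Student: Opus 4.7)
The plan is to apply the Chern--Wolfson theory of harmonic sequences, which reduces the branching bound to a computation of Chern classes of certain line bundles on $M$. Starting from $\phi_0 = \Phi$ viewed as a $\mathbb{C}^{n+1}$-valued map, I would define inductively $\phi_{k+1}$ as the Hermitian-orthogonal component of $\partial_z \phi_k$ relative to $\phi_0, \ldots, \phi_k$, and symmetrically a backward sequence $\phi_{-k}$ via $\partial_{\bar z}$. Harmonicity of $\Phi$ ensures that these maps extend across the branching points and that the family $\{\phi_k\}$ is pairwise Hermitian-orthogonal wherever nonzero. The condition that $\Phi$ is totally isotropic is equivalent to the forward sequence terminating after $m+1$ steps, which also forces $n = 2m$; in the non-totally-isotropic case the full bilateral sequence instead spans the whole of $\mathbb{C}^{n+1}$ and has length $n+1$.

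Each $\phi_k$ spans a holomorphic line subbundle $L_k$ of the trivial bundle $M \times \mathbb{C}^{n+1}$, with the Koszul--Malgrange holomorphic structure induced from the flat connection. The $(1,0)$-derivative yields a holomorphic bundle map $A_k \colon L_k \to L_{k+1} \otimes K_M^{-1}$ whose zero divisor has total degree $r_k \geq 0$; in particular $r_0 = b$. A routine Chern-class computation then produces the recursion
\begin{equation*}
\deg L_{k+1} - \deg L_k = -\chi(M) + r_k.
\end{equation*}
In the non-totally-isotropic case, the $L_k$ provide an orthogonal decomposition of the trivial bundle, forcing $\sum_k \deg L_k = 0$. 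Combined with the reality symmetry $L_{-k} \cong \overline{L_k}$ and the recursion, discarding the non-negative terms $r_k$ for $k \neq 0$ extracts the bound $b = r_0 \leq -\tfrac{1}{2}(n+1)\chi(M)$, which is \eqref{branching_ineq}.

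In the totally-isotropic case the telescoping breaks at $\phi_m$, because the forward sequence alone stops spanning a fixed subbundle. To close the argument I would compute $\deg L_m$ directly from the curvature form of $L_m$: an orthonormal-frame calculation using the isotropy relations expresses the Chern form of $L_m$ as a constant multiple of the energy density $|d\Phi|^2$, so that integration produces the extra $\tfrac{1}{2\pi m} E_g(\Phi)$ contribution in \eqref{branching_ineq2}. The principal obstacle is precisely this endpoint computation: identifying $\deg L_m$ with a multiple of $E_g(\Phi)$ requires careful tracking of the orthogonality relations in $\mathbb{C}^{n+1}$ and of the point contributions at branching points, where several of the $\phi_k$ vanish simultaneously and the naive curvature expression develops singularities that must be accounted for by the Poincaré--Lelong formula.
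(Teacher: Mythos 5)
Your overall framework — the Chern–Wolfson harmonic sequence, the recursion $c_1(L_{p+1})-c_1(L_p)=r(\partial_p)+\chi(M)$ coming from the fact that $\partial_p$ is a nonzero holomorphic section of $K\otimes L_p^*\otimes L_{p+1}$, and identifying $r(\partial_0)$ with $b$ — is the same as the paper's. (Incidentally, the target of $A_k$ should be $L_{k+1}\otimes K_M$, not $L_{k+1}\otimes K_M^{-1}$, so the sign of $\chi(M)$ in your stated recursion is reversed, though that reads as a slip rather than a conceptual error.) For the totally isotropic case your plan of computing $\deg L_m$ from curvature and relating it to the energy density is in the same spirit as the paper, which instead pulls the degree relations $c_1(L_{-m+k})=\delta_{k-1}-\delta_k$ and $E(\psi_{-m+k})=\pi(\delta_{k-1}+\delta_k)$ from the osculating degrees of the directrix and then telescopes; both routes should close.

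The gap is in the non-totally-isotropic case. You assert that there the bilateral sequence $\{L_k\}$ provides an orthogonal decomposition of the trivial bundle $M\times\mathbb{C}^{n+1}$, spanning with length $n+1$, and deduce $\sum_k\deg L_k=0$. That is exactly what fails. Failure of total isotropy means precisely that some bilinear form $\mathcal{H}_q$ on $L_q$ is nonzero; but $\mathcal H_q\ne 0$ is the statement that some local section $\xi$ of $L_q$ has $\xi\cdot\xi\ne 0$, i.e.\ $\langle\xi,\bar\xi\rangle\ne 0$, i.e.\ $L_q\not\perp\bar L_q=L_{-q}$. So mutual orthogonality of the $L_k$ is lost exactly when $\Phi$ is not totally isotropic, the trivial bundle need not decompose as a direct sum of the $L_k$ (the sequence can even be of length shorter than $n+1$, e.g.\ cyclic), and the identity $\sum_k\deg L_k=0$ is not available. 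The paper's replacement for this missing ingredient is the key observation you do not use: when $\mathcal H_i\equiv 0$ for $i<q$ and $\mathcal H_q\not\equiv 0$, Proposition~\ref{holo_prop} makes $\mathcal H_q$ a \emph{nonzero holomorphic} section of $(L_q^*)^{\otimes 2}$, forcing $c_1(L_q)\leqslant 0$; and the partial orthogonality of $L_{-q+1},\ldots,L_q$ established in Proposition~\ref{induction_prop} bounds $2q\leqslant n+1$. Telescoping only from $0$ to $q-1$, discarding the nonnegative ramification terms and the nonpositive $c_1(L_q)$, gives $b\leqslant q(2\gamma-2)\leqslant(n+1)(\gamma-1)=-\tfrac12(n+1)\chi(M)$. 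Without the holomorphic bilinear form, you have no control on the index at which to stop the telescope and no sign information on any $c_1(L_q)$, so your Case~A argument does not go through.
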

For the definition of a totally isotropic map see Section~\ref{HSS2}. For now, we recall that totally isotropic maps are conformal, i.e. $E_g(\Phi)=\area_{\Phi^*g_{\mathbb{S}^n}}(M)$.
\begin{remark}
We note that the inequality~\eqref{branching_ineq2} becomes an equality if $m=1$. In that case,~\eqref{branching_ineq2} is a classical Riemann-Hurwitz formula for holomorphic maps $\Phi\colon M\to \mathbb{S}^2$.
\end{remark}
\begin{remark}
We remark that the bounds similar to~\eqref{branching_ineq} were obtained in~\cite{ET} for $n=4$. The paper~\cite{ET} also contains bounds on total branching order for other types of maps, including CMC-surfaces in space forms. Our approach is inspired by the argument in~\cite[Theorem 7.1]{Kusner}.
\end{remark}

\begin{corollary}
If $M\cong\mathbb{T}^2$ or $M\cong\mathbb{KL}$, where $\mathbb{KL}$ stands for the Klein bottle,  and $\Phi$ is not totally isotropic, then $\Phi$ does not have branch points.
\end{corollary}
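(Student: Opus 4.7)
The plan is an immediate application of Theorem~\ref{Branching_th}, inequality~\eqref{branching_ineq}. Both surfaces in question have vanishing Euler characteristic: $\chi(\mathbb{T}^2)=0$ and $\chi(\mathbb{KL})=0$. Under the standing hypotheses of the theorem, $\Phi$ is a linearly full harmonic map that is not totally isotropic, so
\[
b \leqslant -\tfrac{1}{2}(n+1)\chi(M) = 0.
\]
Since $b$ is a sum of multiplicities of branch points, it is a non-negative integer; therefore $b=0$ and $\Phi$ has no branch points.

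The only point that requires a small comment is the Klein bottle case, since $\mathbb{KL}$ is non-orientable and parts of the machinery behind Theorem~\ref{Branching_th} (harmonic sequences, complex structure on $T^{1,0}M$) are most naturally formulated on orientable surfaces. I would handle this by passing to the orientation double cover $\pi\colon \widetilde{M}\to \mathbb{KL}$, which is a torus, and lifting the metric and map to $\tilde g = \pi^* g$ and $\tilde\Phi = \Phi\circ \pi$. The lifted map $\tilde\Phi\colon(\widetilde{M},\tilde g)\to\mathbb{S}^n$ is again harmonic and linearly full, and it is not totally isotropic (total isotropy is a local condition preserved and reflected by the local isometry $\pi$). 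Applying Theorem~\ref{Branching_th} on $\widetilde{M}$ with $\chi(\widetilde{M})=0$ yields that $\tilde\Phi$ has no branch points. Since $\pi$ is a local diffeomorphism, the branch points of $\Phi$ lift to branch points of $\tilde\Phi$ of the same multiplicity, so $\Phi$ has no branch points either.

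There is essentially no obstacle here: the corollary is the boundary case of the main theorem, and the only mildly nontrivial ingredient is reducing the non-orientable case to the orientable one, for which the orientation double cover works cleanly.
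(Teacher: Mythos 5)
Your proof is correct and is the intended immediate application of Theorem~\ref{Branching_th}: $\chi(\mathbb{T}^2)=\chi(\mathbb{KL})=0$ forces $b\leqslant 0$, hence $b=0$. Your double-cover treatment of the Klein bottle case is exactly how the paper itself extends the proof of Theorem~\ref{Branching_th} to non-orientable surfaces, so strictly speaking you could invoke the theorem directly; nonetheless, the care you take is warranted because ``totally isotropic'' is defined via the harmonic sequence, which needs the complex structure on the orientable double cover.
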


We record several corollaries of the results in Section~\ref{HSS2} which could be of interest for applications. The following proposition is proved in Section~\ref{CaseA:sec}.
\begin{proposition}
\label{nonconformal:prop}
Suppose that $\Phi\colon (M,g)\to\mathbb{S}^n$ is a non-conformal harmonic map. Then
$$
b\leqslant -\chi(M).
$$
\end{proposition}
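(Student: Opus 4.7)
The natural approach is via the Hopf differential. I would view $\Phi$ as an $\mathbb{R}^{n+1}$-valued map satisfying $|\Phi|^2\equiv 1$, and in any local conformal coordinate $z$ on $M$ define the quadratic form
\[
Q = \langle \bd_z\Phi,\bd_z\Phi\rangle\,dz^2.
\]
A direct computation, using the harmonic map equation $\bd_{z\bar z}\Phi = -|\bd_z\Phi|^2\,\Phi$ together with the relation $\langle \Phi,\bd_z\Phi\rangle=0$ obtained by differentiating $|\Phi|^2\equiv 1$, shows that $Q$ is a globally well-defined holomorphic section of $K_M^{\otimes 2}$ (provided $M$ is orientable), and that $Q$ vanishes identically if and only if $\Phi$ is weakly conformal. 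In particular, under the hypothesis of the proposition $Q$ is a non-zero holomorphic quadratic differential.

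Assuming $M$ is orientable of genus $\gamma$, the total order of vanishing of $Q$ equals $\deg(K_M^{\otimes 2}) = 4\gamma-4 = -2\chi(M)$. At a branch point $p$, the branching multiplicity $m_p$ is the order of vanishing of $\bd_z\Phi$ at $p$ in the sense adopted in Section~\ref{HSS2}, so every component of the $\mathbb{R}^{n+1}$-valued function $\bd_z\Phi$ vanishes at $p$ to order at least $m_p$. Consequently $\langle \bd_z\Phi,\bd_z\Phi\rangle$ vanishes at $p$ to order at least $2m_p$, and summation over the branch locus gives
\[
2b \leqslant \sum_p \ord_p(Q) = -2\chi(M),
\]
which is the desired inequality.

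For non-orientable $M$, I would pass to the orientable double cover $\pi\colon\tilde M\to M$. Since $\pi$ is an unramified local isometry, the lift $\tilde\Phi=\Phi\circ\pi$ is harmonic and non-conformal, and has total branching order $2b$. Applying the orientable case to $\tilde\Phi$ gives $2b\leqslant -\chi(\tilde M) = -2\chi(M)$, whence $b\leqslant -\chi(M)$. The only delicate point is the identification of the branching multiplicity with $\ord_p(\bd_z\Phi)$, which is exactly what the harmonic sequence apparatus of Section~\ref{HSS2} delivers; once this is in hand, the proposition reduces to a degree count for the Hopf differential.
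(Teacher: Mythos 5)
Your proof is correct, and it is the elementary, self-contained version of what the paper does more structurally. The paper obtains Proposition~\ref{nonconformal:prop} as the $q=1$ specialization of the Case~(A) bound $\deg(B)\leqslant q(2\gamma-2)$: there, the nonvanishing of $\mathcal H_1$ as a holomorphic section of $(L_1^*)^2$ forces $c_1(L_1)\leqslant 0$, and substituting this into the ramification identity $r(\partial_0)=2\gamma-2+c_1(L_1)-c_1(L_0)$ with $c_1(L_0)=0$ gives $b\leqslant 2\gamma-2$. Your argument packages exactly this content into the single object $Q = (\partial_z\Phi\cdot\partial_z\Phi)\,dz^2$; since $L_0$ is trivialized by $\Phi$ and $\partial_0$ is identified with $\partial_z\Phi$, one has $Q=\mathcal H_1\cdot(\partial_0)^2$ as a section of $(L_1^*)^2\otimes(K\otimes L_1)^2\cong K^2$, and your degree count on $K^2$ is precisely the sum of the paper's two Chern-class relations. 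What your route buys is that it avoids the harmonic-sequence apparatus entirely and needs only the classical facts that the Hopf differential of a harmonic map is holomorphic and vanishes identically iff the map is weakly conformal; what the paper's route buys is that the same ramification bookkeeping simultaneously yields the general Case~(A) inequality $\deg(B)\leqslant (n+1)(\gamma-1)$ of Theorem~\ref{Branching_th}, of which this proposition is then a free corollary. Two small cosmetic points: $\partial_z\Phi$ is $\mathbb{C}^{n+1}$-valued, not $\mathbb{R}^{n+1}$-valued, and your $\langle\cdot,\cdot\rangle$ should be read as the $\mathbb{C}$-bilinear dot product (the paper's $u\cdot v$), not the Hermitian one; neither affects the argument.
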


By Theorem~\ref{extremal_th} conformally extremal metrics for $\bar\lambda_k(M,g)$ have the form $h = \frac{1}{2}|\nabla\Phi|_g^2g$ for a harmonic map $\Phi\colon (M,g)\to\mathbb{S}^n$. The total number of branching points $b$ of $\Phi$ can be interpreted in terms of $h$ as follows. The point $p$ is a branching point of $\Phi$ of order $k$ (i.e. $d\Phi$ has a zero of order $k$ at $p$) iff $h$ has a conical singularity at $p$ of conical angle $2\pi(k+1)$. Thus, $b$ can be interpreted as a total number of conical singularities of $h$ counted with multiplicity. The following proposition is proved in Section~\ref{extremalbranching_sec}.

\begin{proposition}
\label{extremalbranching_prop}
There exists a universal constant $C$ such that for any metric $h$ conformally extremal for the functional $\bar\lambda_k(M,\cdot)$ one has that the total number $b$ of conical singularities of $h$ counted with multiplicities satisfies
$$b\leqslant C(\gamma+1)(\gamma+k),$$
where $\gamma$ is the genus of $M$.
\end{proposition}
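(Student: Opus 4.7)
The plan is to realise $h$ as the ``pullback''-type metric associated with a harmonic map and then apply the branching estimates from Section~\ref{HSS2}. Fix a smooth reference metric $g\in[h]$ and write $h=\phi^2 g$ for a nonnegative smooth $\phi$ with isolated zeros. By Theorem~\ref{extremal_th} the conformal extremality of $h$ produces a harmonic map $\Phi\colon(M,h)\to\mathbb{S}^n$ whose components $\phi_j$ are $\lambda_k(M,h)$-eigenfunctions of $\Delta_h$; since harmonicity of maps out of a surface is conformally invariant, $\Phi$ is also harmonic as a map out of $(M,g)$. A direct computation using $\sum_j\phi_j^2=1$ together with $\Delta_h\phi_j=\lambda_k(M,h)\phi_j$ gives $|d\Phi|_h^2=\lambda_k(M,h)$ pointwise, and comparing this with the construction $h=\tfrac12|d\Phi|_g^2 g$ provided by Theorem~\ref{extremal_th} forces $\lambda_k(M,h)=2$. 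Near a zero of $\phi$ of order $\ell$ the metric $h$ has a conical angle $2\pi(\ell+1)$ while $\Phi$ has a branch point of order $\ell$ with respect to $g$, so $b$ equals the total branching order of $\Phi$; replacing $\mathbb{S}^n$ by the smallest containing sphere, we may also assume $\Phi$ is linearly full.

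With $\Phi$ in hand I would split into three cases. If $\Phi$ is not conformal then Proposition~\ref{nonconformal:prop} immediately gives $b\leqslant -\chi(M)=2\gamma-2$, which is dominated by $C(\gamma+1)(\gamma+k)$. If $\Phi$ is conformal and not totally isotropic then Theorem~\ref{Branching_th} gives $b\leqslant (n+1)(\gamma-1)$, while if $\Phi$ is conformal and totally isotropic then $n=2m$ and
\[
b\leqslant \frac{E_g(\Phi)}{2\pi m}+(m+1)(\gamma-1).
\]
It therefore remains to bound the dimension $n$ and, in the isotropic case, the energy $E_g(\Phi)$ in terms of $\gamma$ and $k$. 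Linear independence of the components of the linearly full $\Phi$ gives $n+1\leqslant \mult(\lambda_k(M,h))$, and a Besson--Nadirashvili-type linear multiplicity bound for closed surfaces, extended to metrics with conical singularities (cf.~\cite{KNPP}), yields $\mult(\lambda_k(M,h))\leqslant C(\gamma+k)$. For the energy, conformality of $\Phi$ combined with $h=\tfrac12|d\Phi|_g^2 g$ identifies $E_g(\Phi)=\area_h(M)=\tfrac12\bar\lambda_k(M,h)$, so Korevaar's estimate recalled in Remark~\ref{Korevaar_remark} provides $E_g(\Phi)\leqslant \tfrac12\Lambda_k(M)\leqslant Ck(\gamma+1)$. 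Feeding these two bounds into the inequalities above, and using $m\geqslant 1$ in the isotropic case to absorb the energy term, the right-hand side is controlled by $C(\gamma+1)(\gamma+k)$ in each of the three cases.

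The main technical point is the extension of the multiplicity bound to conical metrics: in the smooth setting, the linear estimate $\mult(\lambda_k)\leqslant 2\gamma+2k+1$ is classical, but for metrics with conical singularities one must either approximate by smooth metrics while controlling the convergence of the eigenvalue branches, or adapt the nodal-domain argument directly using the regularity of Laplace eigenfunctions near conical points. Once this ingredient is in place, together with Korevaar's inequality and the three-case application of the branching estimates above, the proof assembles into the asserted inequality $b\leqslant C(\gamma+1)(\gamma+k)$.
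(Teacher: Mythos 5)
Your argument follows the paper's proof essentially step for step: invoke Theorem~\ref{extremal_th} to produce a harmonic map, bound the dimension $n$ by Nadirashvili's multiplicity estimate, and in the totally isotropic case normalize so that $\lambda_k=2$ and bound the energy via Remark~\ref{Korevaar_remark}, then feed everything into the branching estimates of Theorem~\ref{Branching_th}. The only cosmetic differences are that you split off the non-conformal case separately (the paper handles it together with the conformal-but-not-isotropic case) and that your derivation of ``$\lambda_k(M,h)=2$'' should really be phrased as a harmless rescaling of $h$ rather than as a consequence of Theorem~\ref{extremal_th}.
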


\begin{remark}
We note that for $k=1$ the proof yields an explicit expression for $C$. In general, $C$ will depend on the constant in Remark~\ref{Korevaar_remark}.
\end{remark}
\begin{remark}
For a similar statement concerning extremal metrics for the first eigenvalue of the Dirac operator see~\cite[Theorem 1.1(C)]{Ammann}
\end{remark}
%

\subsection{Discussion}
It was noticed in~\cite[Theorem 1.4]{CKM} that properties of $\bar\lambda_1$-maximal metrics induced by branched minimal immersions to $\mathbb{S}^2$ differ significantly from those induced from $\mathbb{S}^n$ with $n\geqslant 3$. At the same time, as we will see in Section~\ref{Proof_section}, if the equality in Yang-Yau inequality is attained for a particular value of $\gamma$, then $\bar\lambda_1$-maximal metrics on an orientable surface of genus $\gamma$ are induced from the two-dimensional sphere. This is our primary motivation in studying the equality case of inequality~\eqref{YYineq}. In particular, Theorem~\ref{YYth} implies the following proposition.
\begin{proposition}
Let $\Sigma_\gamma$ denote an orientable surface of genus $\gamma>2$. Assume that 
\begin{equation}
\label{notS2}
\Lambda_1(\Sigma_\gamma)>8\pi\left[\frac{\gamma+1}{2}\right].
\end{equation}
Then any $\bar\lambda_1$-maximal metric on $\Sigma_\gamma$ (if such a metric exist) is induced by a branched minimal immersion to $\mathbb{S}^n$ with $n>2$.
\end{proposition}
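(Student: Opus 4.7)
The plan is to argue by contradiction: suppose that a $\bar\lambda_1$-maximal metric $h$ on $\Sigma_\gamma$ exists and is induced by a branched minimal immersion $\Phi\colon\Sigma_\gamma\to\mathbb{S}^2$, and deduce a contradiction with the hypothesis~\eqref{notS2}.

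The first step is to pin down $\lambda_1(h)$. As coordinates of a branched minimal immersion into the unit sphere, the components of $\Phi$ satisfy the classical identity $\Delta_h\Phi_i=2\Phi_i$ (using the paper's sign convention $\Delta=\delta\circ d$). On the other hand, since $h$ is $\bar\lambda_1$-extremal, Theorem~\ref{extremal_th} supplies a conformal harmonic map inducing $h$ (up to constant rescaling, which does not affect $\bar\lambda_k$) whose components are $\lambda_1(h)$-eigenfunctions; matching this map with the linearly full $\Phi$ (unique up to an isometry of~$\mathbb{S}^2$) forces the components $\Phi_i$ to be $\lambda_1(h)$-eigenfunctions as well, whence $\lambda_1(h)=2$.

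Next I would compute $\bar\lambda_1(h)$. A conformal harmonic map from a Riemann surface to $\mathbb{S}^2$ is, up to reversing orientation, holomorphic of some integer degree $d\geq 1$, so $\area_h(\Sigma_\gamma)=4\pi d$ and therefore $\bar\lambda_1(h)=2\cdot 4\pi d=8\pi d$. Maximality of $h$ gives $\Lambda_1(\Sigma_\gamma)=8\pi d$.

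Finally I would invoke Theorem~\ref{YYth}: since $\gamma>2$ one has $\Lambda_1(\Sigma_\gamma)<8\pi\left[\frac{\gamma+3}{2}\right]$, so $d<\left[\frac{\gamma+3}{2}\right]$, and because $d$ is an integer and $\left[\frac{\gamma+3}{2}\right]=\left[\frac{\gamma+1}{2}\right]+1$, this forces $d\leq\left[\frac{\gamma+1}{2}\right]$. Hence $\Lambda_1(\Sigma_\gamma)=8\pi d\leq 8\pi\left[\frac{\gamma+1}{2}\right]$, contradicting~\eqref{notS2}. The argument is essentially bookkeeping once Theorem~\ref{YYth} is in hand; the only delicate point is the identification of the map from Theorem~\ref{extremal_th} with the hypothesized $\Phi$, which pins down $\lambda_1(h)=2$ and allows the integer gap $\left[\frac{\gamma+3}{2}\right]-\left[\frac{\gamma+1}{2}\right]=1$ to close the argument.
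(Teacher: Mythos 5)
Your proposal is correct and is essentially the argument the paper gives: a $\bar\lambda_1$-maximal metric induced by a branched minimal immersion to $\mathbb{S}^2$ forces $\Lambda_1(\Sigma_\gamma)$ to be an integer multiple of $8\pi$, and Theorem~\ref{YYth} together with hypothesis~\eqref{notS2} squeeze $\Lambda_1(\Sigma_\gamma)$ strictly between the consecutive multiples $8\pi\left[\frac{\gamma+1}{2}\right]$ and $8\pi\left[\frac{\gamma+3}{2}\right]$, a contradiction. You spell out the intermediate computation ($\lambda_1(h)=2$, $\area_h=4\pi d$, hence $\bar\lambda_1(h)=8\pi d$) that the paper compresses into the one-line remark that $\Lambda_1(\Sigma_\gamma)$ must be an integer multiple of $8\pi$; the ``matching'' step you flag as delicate is actually unnecessary, since ``induced by a branched minimal immersion to $\mathbb{S}^2$'' in this context (via Theorem~\ref{extremal_th}) already means the components of that immersion are $\lambda_1(h)$-eigenfunctions.
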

\begin{proof}
Assume the contrary, i.e. that~\eqref{notS2} holds and there is a $\bar\lambda_1$-maximal metric on $\Sigma_\gamma$ which is induced by a branched minimal immersion to $\mathbb{S}^2$. The latter implies that $\Lambda_1(\Sigma_\gamma)$ is an integer multiple of $8\pi$. At the same time, Theorem~\ref{YYth} and inequality~\eqref{notS2} imply
$$
8\pi\left[\frac{\gamma+1}{2}\right]<\Lambda_1(\Sigma_\gamma)<8\pi\left[\frac{\gamma+3}{2}\right],
$$
i.e. $\Lambda_1(\Sigma_\gamma)$ is squeezed between two consequent multiples of $8\pi$. We arrive at a contradiction. 
\end{proof}

Note that by the results of Petrides~\cite{Petrides1}, $\bar\lambda_1(\Sigma_\gamma,g)$-maximal metrics smooth outside conical singularities exist provided $\Lambda_1(\Sigma_\gamma)>\Lambda_1(\Sigma_{\gamma-1})$. If $\gamma=3$, it was shown in~\cite{Matthiesen_thesis} that $\Lambda_1(\Sigma_3)>\Lambda_1(\Sigma_2)=16\pi$, which in this case guarantees both the existence of maximal metrics and condition~\eqref{notS2}. Therefore, $\bar\lambda_1$-maximal metrics on $\Sigma_3$ exist and are induced by a branched minimal immersion to $\mathbb{S}^n$ with $n>2$. At the same time, by~\cite{CES} one has $\Lambda_1(\Sigma_4)\geqslant \Lambda_1(\Sigma_3)>\Lambda_1(\Sigma_2)=16\pi$, thus, condition~\eqref{notS2} is satisfied for $\gamma=4$ as well. As a result, $\bar\lambda_1$-maximal metrics on $\Sigma_4$ (if exist) are induced by a branched minimal immersion to $\mathbb{S}^n$ with $n>2$.

\subsection{Outline of the proof} 
The proof of Theorem~\ref{YYth} has a lot of moving pieces. For convenience of the reader, we outline them below.

Assume that for a metric $g$ one has the equality in~\eqref{YYineq}. The starting point of the proof is an observation that the complex structure associated to $g$ admits a unique holomorphic map to $\mathbb{S}^2$ of degree $\left[\frac{\gamma+3}{2}\right]$, see Theorem~\ref{uniquepencil}. At the same time, Brill-Noether theory implies that for $\gamma>2$ such complex structures are special in the moduli space of Riemann surfaces, see discussion after Theorem~\ref{ind1}. Dealing with these special complex structures is the main difficulty of the proof. It relies on an interplay between Laplace eigenvalues and index bounds for minimal submanifolds of $\mathbb{R}^3$ explained in Section~\ref{prelim_section}. It turns out that the particular properties of the complex structure associated to $g$ allows one to construct a branched multivalued minimal immersion to $\mathbb{R}^3$ and, as a result, apply the index bound of Section~\ref{index_section} to conclude the proof.

\subsection*{Plan of the paper} The paper is organized in the following way. In Section~\ref{prelim_section} we recall the relevant background, including algebraic geometry notations, a Schr\"odinger operator associated to a meromorphic function and metrics with conical singularities. Section~\ref{branched_section} is devoted to multivalued branched minimal immersions. Section~\ref{index_section} contains a new index bound for such immersions. In Section~\ref{Proof_section} we complete the proof of Theorem~\ref{YYth}. Finally, Theorem~\ref{Branching_th} is proved in Section~\ref{branching_sec}.

\subsection*{Acknowledgements} The author is grateful to V.~Baranovsky, M.~Coppens, A.~Neves, I.~Polterovich and R.~Schoen for fruitful discussions. A special thanks goes to M.~Coppens for providing the author with a copy of his thesis~\cite{Coppens_thesis}.  The author thanks V.~Medvedev and I.~Polterovich for remarks on the preliminary version of the manuscript.

\section{Preliminaries}
\label{prelim_section}
\subsection{Algebraic geometry: background and notations.} 
\label{AG_notations}
Let $(M,g)$ be an oriented Riemannian surface of genus $\gamma$. Isothermal coordinates induce the complex structure on $M$ compatible with the metric $g$. The surface $M$ endowed with this complex structure becomes a Riemann surface or a smooth complex curve. In the following holomorphic objects on $(M,g)$ are considered with respect to this particular complex structure. We remark that the complex structure only depends on the conformal class of the metric $g$.
The unit $2$-sphere $\mathbb{S}^2\subset\mathbb{R}^3$ admits the unique conformal class and as a result the unique complex structure. 

Since the conformal classes are closely related to complex structures, in the following we often make use of notations from the theory of Riemann surfaces, or equivalently, complex algebraic curves. One could consult~\cite[Chapter I]{ACGH} for a more detailed overview of the notions below. A divisor $D$ on a Riemann surface $M$ is a formal linear combination of points on $M$, $D = \sum_{p\in M} n_p p$, where $n_p\in\mathbb{Z}$ and $n_p\ne 0$ only for a finite set of points. The number $n_p$ is called {\em multiplicity} $\mult_p(D)$ of $D$ at $p$. The {\em degree} is defined as $\deg(D) = \sum n_p$. Divisor $D$ is {\em effective} if $\mult_p(D)\geqslant 0$ for all $p\in M$, we use notation $D\geqslant 0$ for effective divisors. To any meromorphic function $f$ on $M$ one associates a divisor $(f) = \sum\ord_p(f) p$, where $\ord_p(f) = k$ iff in the local complex coordinate centered at $p$ one has $f = z^k g(z)$, where $g$ is holomorphic and $g(p)\ne 0$. The divisor $(f)$ is represented as a difference of two effective divisors $(f) = N_f - P_f$, where 
$$
N_f =  \sum_{\ord_p(f)\geqslant 0}\ord_p(f) p\, ,\qquad P_f = -\sum_{\ord_p(f)\leqslant 0}\ord_p(f) p.
$$
$N_f$ is called {\em a null divisor} of $f$ and $P_f$ is called {\em a polar divisor} of $f$. Two divisors $D$ and $D'$ are {\em linearly equivalent} if $D-D' = (f)$ for some meromorphic function $f$. One sets
\begin{equation*}
\begin{split}
H^0(M,D) &= \{f\,|\,\text{$f$ is meromorphic},\, (f)+D\geqslant 0\} \\
h^0(D) &=\dim_{\mathbb{C}}H^0(M,D)
 \end{split}
\end{equation*}
and we identify $D$ with a line bundle whose local sections over $U\subset M$ are $H^0(U,D)$. This way, tensor product of bundles corresponds to sum of divisors, dual to the line bundle $D$ is a bundle corresponding to the divisor $-D$ and $\deg(D)$ is the first Chern number of the bundle. We use $K$ to denote the divisor of any meromorphic $1$-form, recall that $h^0(K) = \gamma$.

The {\em complete linear series} $|D|$ is the set of effective divisors linearly equivalent to $D$, i.e. it is a projectivization of $H^0(M,D)$. A {\em linear series} $\mathcal V$ is a projectivization $\mathbb{P}V$ of a linear subspace $V\subset H^0(M,D)$. 
The {\em base locus} of $\mathcal V$ is the divisor 
$$
B = \sum_{p}\min_{f\in V}(\ord_p(f))p.
$$
A series $\mathcal V$ is {\em base-point free} if $B = 0$.
A series $\mathcal V$ is called {\em a pencil} if $\dim V=2$. The classical (and essentially the only) examples of pencils come from holomorphic maps $\phi\colon M\to\mathbb{S}^2=\mathbb{CP}^1$ and are given by 
$$
\mathcal V_\phi = \left\{\sum_{q\in\phi^{-1}(p)}(\ord_q(d\phi)+1)q\right\}_{p\in\mathbb{CP}^1}
$$
The divisor corresponding to $p\in\mathbb{CP}^1$ is just the preimage $\phi^{-1}(p)$, where each point is taken with an appropriate multiplicity. Note that $\mathcal V_\phi$ is base-point free.
Conversely, given a base-point free pencil $\mathcal V$, let $f_0,f_1\in V\subset H^0(M,D)$ be a basis of $V$. Then $\phi\colon M\to\mathbb{CP}^1$ given in homogeneous coordinates by $\phi = [f_0\colon f_1]$ satisfies $\mathcal V_\phi=\mathcal V$.
In general, if the base locus $B\ne 0$, then $\phi$ can be defined over $B$ and $\mathcal V_\phi$ can be identified with a base-point free pencil $\mathcal V'\subset \mathbb{P}H^0(M,D-B)$.

Degree of a linear series is the degree of any divisor in it. 
If one identifies the holomorphic map  $\phi\colon M \to\mathbb{CP}^1$ with a meromorphic function, then one has  $\mathcal V_\phi\subset |P_\phi|$ and $\deg V_\phi=\deg\phi$.

%
%
%

\subsection{Conformally covariant Schr\"odinger operator}
Let $\phi\colon M\to\mathbb{S}^2$ be a holomorphic map and let $g$ be any smooth metric compatible with the complex structure. Montiel and Ros~\cite{MR} defined the operator $L_{\phi,g}\colon C^\infty(M)\to C^\infty(M)$ using the formula,
$$
L_{\phi,g}(u) = \Delta_gu-|\nabla\phi|_g^2u,
$$
where $|\nabla\phi|^2_g = \sum_{i=1}^3|\nabla\phi_i|_g^2$. If $\tilde g = f^2g$ is another metric compatible with the complex structure, then
$$
L_{\phi,\tilde g}(u) = f^{-2}L_{\phi,g}(u),
$$
i.e. $L_{g,\phi}$ is conformally covariant. As a result, the number of negative eigenvalues and the number of zero eigenvalues of $L_{\phi,g}$ do not depend on the choice of metric $g$. Indeed, the associated quadratic form
$$
Q_\phi(u) = \int\limits_M|\nabla u|_g^2 - |\nabla \phi|_g^2u^2\,dv_g
$$
is independent of the choice of $g$.

\begin{definition}
The index $\ind(\phi)$ of a holomorphic map $\phi\colon M\to\mathbb{S}^2$ is defined as a number of negative eigenvalues of the operator $L_{\phi,g}$ for some (any) choice of the metric $g$ compatible with the complex structure on $M$. 

The nullity $\nul(\phi)$ of $\phi$ is defined as a number of zero eigenvalues of $L_{\phi,g}$ for some (any) choice of $g$ compatible with the complex structure.
\end{definition}
\begin{remark}
Since any holomorphic map is harmonic, one has $\phi_i\in\ker(L_{\phi,g})$ for all $g$ and $i=1,2,3$. As a result, $\nul(\phi)\geqslant 3$.
\end{remark}


It is often useful to choose metrics with conical singularities. Let us describe those in more detail.

\subsection{Metrics with conical singularities.} 
As we have mentioned in the introduction, we consider an extended conformal class of metrics, namely, metrics with conical singularities. For our purposes it is enough to consider metrics with conical angles which are multiples of $2\pi$. These are very easy to describe. For a fixed metric $g$ such metrics have the form $h=f^2g$, where $f\geqslant 0$ is a smooth function with isolated zeroes. The set of zeroes $Z$ of $f$ corresponds to singular points of $h$. The metric $h$ induces a well-defined measure $dv_h = f^2\,dv_g$ on $M$. Moreover, by conformal invariance, the form $Q_\phi$ satisfies
$$
Q_\phi(u) =\int\limits_{M\setminus Z}|\nabla u|_h^2 - |\nabla \phi|_h^2u^2\,dv_h,
$$
where the right hand side is treated as an improper integral. We consider the Friedrichs extension of the operator $L_{\phi,h}$ on the space $L^2(dv_h)$ with domain $C^\infty_0(M\setminus Z)$. Since the capacity of a point is zero, the domain of this extension contains $C^\infty(M)$. Moreover, the eigenfunctions are smooth and the classical variational formula for eigenvalues holds. 

Having discussed metrics with conical singularities, we consider two choices of metrics $g$ that make $L_{\phi,g}$ have a particularly nice form.

\subsection{Induced metric}
\label{induced_section}
 Set $g = \phi^*g_{\mathbb{S}^2}$. Then $g$ is a metric with conical singularities at branch points of $\phi$ and $|\nabla\phi|^2_g\equiv 2$. We conclude that $\ind(\phi)$ is the number of eigenvalues of $\Delta_g$ that are less than $2$ and $\nul(\phi)$ is multiplicity of the eigenvalue $2$, i.e. using the Weyl's counting function notations from the introduction, one has $\ind(\phi)=N_{g}(2)$.

%

\subsection{Branched multivalued minimal immersions into $\mathbb{R}^3$} 
\label{branched_section}
Let $\widetilde M$ be the universal cover of $M$. The branched minimal immersion $X\colon \widetilde M\to\mathbb{R}^3$ is called {\em multivalued branched minimal immersion of $M$} if there exists a linear representation $\rho\colon\pi_1(M)\to\mathbb{R}^3$ such that for all $\sigma\in\pi_1(M)$, $p\in\widetilde M$ one has $X(\sigma\cdot p) = X(p) + \rho(\sigma)$.

Using parallel translations in $\mathbb{R}^3$ the pullback of $X^*T\mathbb{R}^3$ can be identified with $\widetilde M\times \mathbb{R}^3$. The differential $dX$ is a section of $\mathrm{Hom}(T\widetilde M,\widetilde M\times \mathbb{R}^3)$ with zeroes at branch points of $X$. 
The branching divisor $\widetilde B$ is defined so that for all points $p\in\widetilde M$ one has $\mult_p\widetilde B = \ord_p (dX)$. Points with $\mult_p\widetilde B\ne0$ are called branch points, otherwise they are called regular. The image of $dX$ at a regular point of $X$ is parallel to the tangent plane to the image of $X$. By~\cite{Osserman} the tangent plane can be smoothly extended to branch points. As a result the closure of the image of $dX$ is a $2$-dimensional subbundle $\widetilde L\subset \widetilde M\times \mathbb{R}^3$. The orientations on $\widetilde M$ and $\mathbb{R}^3$ uniquely define the unit section $\widetilde N$ of the orthogonal complement to $\widetilde L$. 
The induced metric $\widetilde g = X^*g_{\mathbb{R}^3}$ is a metric with conical singularities at branch points of $X$. At a regular point $p$ of $X$ one has $|\nabla\widetilde\phi|_{\widetilde g}^2(p) = -2K(p)$, where $K(p)$ is the Gauss curvature of $X(M)$ at the point $X(p)$. The metric $\widetilde g$ defines an associated complex structure on $\widetilde M$. With respect to this complex structure the Gauss map
 $\widetilde \phi\colon \widetilde M\to\mathbb{S}^2$ given by $p\mapsto \widetilde N(p)$ is holomorphic, see e.g.~\cite{Osserman}. 

 Moreover, since $X(\sigma\cdot p)$ is a parallel translation of $X(p)$ by $\rho(\sigma)$, 
 one has that $\widetilde L(p) = \widetilde L(\sigma\cdot p)$ as subspaces of $\mathbb{R}^3$.
 As a result, one defines a $2$-dimensional subbundle $L\subset M\times\mathbb{R}^3$. Similarly, all the objects defined via $dX$ descend to corresponding objects on $M$. This way we have a complex structure, a metric $g$ with conical singularities compatible with that structure, a branch divisor $B$ and the holomorphic Gauss map $\phi\colon M\to\mathbb{S}^2$, such that $|\nabla\phi|_g^2(p) = -2K(p)$. As a result, the operator $L_{\phi,g}$ takes the form 
 $$
 L_{\phi,g} (u) = \Delta_g u - 2K u,
 $$
which is the {\em Jacobi} (or {\em index}) {\em operator}. 
Negative eigenfunctions of this operator correspond to normal variations of $X$ in the class of branched multivalued immersions that decrease the area $\area_g(M)$. As a result $\ind(\phi)$ coincides with the index of the minimal immersion $X$ considered as a multivalued branched minimal immersion.

\subsection{Index bound} 
\label{index_section}
The idea behind the proof of Theorem~\ref{YYth} is to use the interplay between the two previous interpretations of $\ind(\phi)$. Namely, for a given $\phi$ we will construct a special $X$ so that the application of the following proposition yields Theorem~\ref{YYth}.
\begin{proposition}
\label{Immersionprop}
Let $X$ be a branched multivalued minimal immersion of $M$ to $\mathbb{R}^3$ with Gauss map $\phi$ and branching divisor $B$. Then 
$$
\ind(\phi)\geqslant \frac{2h^0(K-B)-3}{3}.
$$
\end{proposition}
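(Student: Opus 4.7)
The plan is to apply the min-max principle to the Friedrichs extension of the Jacobi operator $L_{\phi,g}=\Delta_g-2K$ by constructing a subspace $W\subset C^\infty(M)$ on which the associated quadratic form
\[
Q_\phi(u)=\int_M\bigl(|\nabla u|_g^2 + 2K\,u^2\bigr)\,dv_g
\]
is nonpositive, and then extracting a lower bound from the inequality $\ind(\phi)\geqslant\dim W-\dim(W\cap\ker L_{\phi,g})$. By the discussion in Section~\ref{branched_section}, this is equivalent to bounding the Morse index of $X$ as a branched multivalued minimal immersion.

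The test functions will be produced from $H^0(K-B)$ via the Weierstrass representation of $X$. First I would record that the three 1-forms $\Phi_j=\partial X_j/\partial z\,dz$ descend from $\widetilde M$ to single-valued holomorphic 1-forms on $M$, lie in $H^0(K-B)$ with common zero divisor exactly $B$, and satisfy the conformality constraint $\sum_j\Phi_j^2=0$; in the non-planar case they span a 3-dimensional subspace. Following the strategy of Ros~\cite[Theorem~14]{Ros}, I would associate to each $\omega\in H^0(K-B)$ a triple of scalar test functions obtained by pairing $\omega$ with the Weierstrass data and projecting onto the unit normal $N$: schematically, one views $\omega\cdot v$ (for $v\in\mathbb{R}^3$) as an infinitesimal perturbation of $\Phi$, computes the induced deformation $\delta X = 2\operatorname{Re}\int\omega\,v$, and takes $u_{\omega,v}=\langle\delta X,N\rangle$. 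The multivaluedness of $\delta X$ drops out after the normal projection, so $u_{\omega,v}\in C^\infty(M)$, and the Weierstrass formula for the second variation of area yields $Q_\phi(u_{\omega,v})\leqslant 0$, with equality precisely when $u_{\omega,v}\in\ker L_{\phi,g}$.

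The main obstacle is the dimension bookkeeping producing the precise factor $(2h^0(K-B)-3)/3$. Complexifying $\omega$ and passing to real and imaginary parts doubles the complex dimension $h^0(K-B)$ into $2h^0(K-B)$ real parameters; the denominator $3$ reflects the three-component structure of the Weierstrass triple modulo the codimension-one conformality relation $\sum_j\Phi_j^2=0$; and the subtraction of $3$ accounts for the trivial translation Jacobi fields $\{\langle w,N\rangle : w\in\mathbb{R}^3\}\subset\ker L_{\phi,g}$, which contribute $Q_\phi=0$ rather than a negative direction. I would organize the count via rank-nullity for the real-linear map $(\omega,v)\mapsto u_{\omega,v}$, verifying that its kernel is exactly the 3-dimensional translation subspace after all Weierstrass redundancies are accounted for. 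Branching and the resulting conical singularities of the induced metric pose no difficulty, since the Friedrichs domain of $L_{\phi,g}$ contains $C^\infty(M)$ (as recalled in the discussion of metrics with conical singularities), so the variational argument proceeds as in the smooth case and yields $\ind(\phi)\geqslant(2h^0(K-B)-3)/3$.
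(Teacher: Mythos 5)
Your overall plan — produce a subspace of test functions on which $Q_\phi$ is non-positive and count dimensions — is the right framework, but the specific construction you sketch does not match the paper's and has a real gap.

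The paper does \emph{not} integrate $\omega$ against a constant vector and then project the resulting deformation onto the normal. Instead, for each real harmonic $1$-form $\omega\in\mathcal H(B)$ it forms the vector field
$$
V_\omega=\bigl(\langle dX^1,\omega\rangle,\langle dX^2,\omega\rangle,\langle dX^3,\omega\rangle\bigr),
$$
i.e.\ a \emph{pointwise} pairing of $\omega$ with the (single-valued) $1$-form $dX$ in the induced metric. This is crucial for two reasons. First, it sidesteps the period problem entirely: your $\delta X=2\operatorname{Re}\int\omega\,v$ is multivalued whenever $\omega$ has nontrivial periods, and $\langle\delta X,N\rangle=\bigl(2\operatorname{Re}\int\omega\bigr)\langle v,N\rangle$ is just as multivalued because $N$ descends to $M$; the normal projection does not kill the periods. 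Second, $V_\omega$ is \emph{tangential}, $\sum_i N^i V^i_\omega=0$, and the whole argument hinges on Ros's identity (Proposition~\ref{Ros_proposition}) $\Delta V_\omega+2KV_\omega=-2\langle\nabla\omega,II\rangle N$: the right-hand side is purely normal, so $\sum_i Q_\phi(V^i_\omega)=0$. You have no analogue of this lemma, and "the Weierstrass formula for the second variation of area yields $Q_\phi(u_{\omega,v})\leqslant0$" does not hold for the normal component of an arbitrary deformation; the second variation only reduces to $Q_\phi$ of the normal component for normal variations, and even then gives no sign.

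Your dimension bookkeeping is also not the paper's. The factor of $3$ in the denominator arises because each basis vector of $U_\phi^-$ imposes three linear conditions on $\omega$ (one per component of $V_\omega$), so the space of $\omega$ with $V_\omega^i\perp U_\phi^-$ for $i=1,2,3$ has real dimension at least $2h^0(K-B)-3\ind(\phi)$; it is not a quotient by the conformality relation $\sum\Phi_j^2=0$. The $-3$ comes from the bound $\dim\mathcal L\leqslant3$ where $\mathcal L=\{\omega:\langle\nabla\omega,II\rangle=0\}$ (Ros's lemma again), not from translation Jacobi fields as you suggest — and the translation fields $\langle w,N\rangle$ are in any case normal, not of the tangential form $V_\omega$. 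To repair the argument you would need to replace the integrated-deformation construction with the pointwise pairing $V_\omega$, verify its smoothness across the branch divisor (which the paper does by a local computation), and invoke Ros's vectorial Jacobi identity and the dimension bound for $\mathcal L$.
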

\begin{remark}
In case $B=0$ this was proven by Ros, although he did not state it in this form, for the closest statement see e.g.~\cite[Theorem 14]{Ros}.
The proof below is an adaptation of his ideas to the branched setting.
\end{remark}
\begin{proof}[Proof of Proposition~\ref{Immersionprop}]
Let $\mathcal H(B)$ be a space of harmonic 1-forms on $M$ vanishing to the order of $B(p)$ at branch points $p$. One can identify $\mathcal H(B)$ with real parts of elements in $H^0(M,K-B)$, i.e. $\dim_{\mathbb{R}}\mathcal H(B) = 2h^0(K-B)$.

Let $\langle\cdot,\cdot\rangle$ be the metric on $1$-forms induced by $\widetilde g = X^*g_{\mathbb{R}^3}$. We claim that for any two elements $\omega_1,\omega_2\in \mathcal H(B)$ the inner product $\langle\omega_1,\omega_2\rangle$ is a smooth function on $M$. We only need to check this statement locally in the neighbourhood of branched points. Let $U$ be a neighbourhood of a branch point $p$ such that $\mult_p B = k$. Then in local complex coordinates centered at $p$ one has $\omega_i = \mathrm{Re}(z^k\xi_i(z)dz)$, $i=1,2$, where $\xi_i(z)$ is holomorphic, and $g = |z|^{2k}f^2(z)|dz|^2$, where $f(z)\ne 0$ in $U$. Then the direct computation yields $\langle\omega_1,\omega_2\rangle = f^{-2}(z)\mathrm{Re}(\xi_1\bar \xi_2)$, which is smooth in $U$.

It is well-known that components of the minimal immersion into $\mathbb{R}^3$ are harmonic functions. Since the differential commutes with the Laplacian, the components of the differential $dX$ are harmonic $1$-forms on $M$. Moreover, they have zeroes of order $\mult_p B$ at $p$, therefore they lie in $\mathcal H(B)$.
For each element $\omega\in\mathcal H(B)$ consider the vector field 
$$
V_\omega=(\langle dX^1,\omega\rangle, \langle dX^2,\omega\rangle, \langle dX^3,\omega\rangle),
$$
where $dX^i$, $i=1,\ldots,3$ are components of $dX$.
Outside branch points, $V_\omega$ is a section of $L$. Thus, by our previous discussion
$V_\omega$ is a smooth vector function on $M$ satisfying $\sum_{i=1}^3 N^i V_\omega^i = 0$, where $N^i$ are components of the normal field $N$.

Ros~\cite[Lemma 1]{Ros} proved the following proposition.
\begin{proposition} 
\label{Ros_proposition}
At the regular points  one has
$$
\Delta V_\omega +2KV_\omega = -2\langle \nabla\omega, II\rangle N,
$$
where $\Delta$ is the Laplacian on functions applied component-wise and $II$ is the second fundamental form.
Moreover, the space $\mathcal L = \{\omega\in\mathcal H(B)|\,\, \langle\nabla\omega,II\rangle = 0\}$ is of dimension at most $3$.
\end{proposition}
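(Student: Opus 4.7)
Plan: For Part 1, the key input is that the components $X^i$ of a minimal immersion into $\mathbb{R}^3$ are harmonic functions, so each $dX^i$ is a harmonic $1$-form on $M$. In dimension two, the Bochner formula gives $\nabla^*\nabla\alpha = -K\alpha$ for any harmonic $1$-form $\alpha$; combining this with the product rule for the Hessian of the inner product of two $1$-forms yields
\[
\Delta\langle\alpha,\beta\rangle + 2K\langle\alpha,\beta\rangle = -2\langle\nabla\alpha,\nabla\beta\rangle
\]
for any pair of harmonic $1$-forms $\alpha,\beta$ (in the sign convention $\Delta=d^*d\geq 0$). Setting $\alpha=dX^i$ and $\beta=\omega$ and using the Gauss formula $\nabla dX^i = II\cdot N^i$, so that $\langle\nabla dX^i,\nabla\omega\rangle = N^i\langle II,\nabla\omega\rangle$, gives the claimed identity componentwise.

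For Part 2, I would first note that $\omega\mapsto V_\omega$ is injective on $\mathcal H(B)$: at any regular point $p$ the differential $dX_p$ is an isometric isomorphism of $T_pM$ onto the tangent plane, which gives the pointwise identity $|V_\omega(p)|^2_{\mathbb{R}^3} = |\omega(p)|^2_g$, so $V_\omega\equiv 0$ forces $\omega\equiv 0$ on the dense regular set. Next I would exhibit three canonical elements of $\mathcal L$: for each $\xi\in\mathbb{R}^3$, set $\omega_\xi = (\xi\times N)\cdot dX$. A direct computation in a local orthonormal tangent frame $(T_1,T_2)$ with $T_1\times T_2 = N$ gives $*\omega_\xi = \xi\cdot dX$; since $\xi\cdot dX$ is closed, $\omega_\xi$ is both closed and coclosed, hence harmonic, and it manifestly lies in $\mathcal H(B)$ since $dX$ vanishes to the required order at branch points. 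One checks $V_{\omega_\xi} = \xi\times N$, whose components are linear combinations of the $N^j\in\ker L_{\phi,g}$, so by Part 1 we get $\omega_\xi\in\mathcal L$, producing a $3$-dimensional subspace.

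To show no further elements exist, I would analyze an arbitrary $\omega\in\mathcal L$. Each $V_\omega^i$ lies in $\ker L_{\phi,g}$, and after subtracting an appropriate $\omega_\xi$ one may assume the ``translational'' component of $V_\omega$ in $\mathrm{span}\{N^1,N^2,N^3\}$ vanishes. Any remaining contribution must then come from ``rotational'' Jacobi fields $(X\times N)^j$ (which lie in $\ker L_{\phi,g}$ by a direct computation using the Gauss--Weingarten equations) or from possibly more exotic Jacobi fields. The rigidity step is the observation that $(X\times N)\cdot dX$ is not closed: using $dX^k\wedge dX^l = \varepsilon^{klm}N^m\,dA_g$ one computes $d\bigl((X\times N)\cdot dX\bigr) = -2\,dA_g\neq 0$. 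Hence any rotational contribution to $V_\omega$ would force $\omega$ to fail closedness, contradicting $\omega\in\mathcal H(B)$, and combined with injectivity this gives $\dim\mathcal L\leq 3$. The main obstacle is promoting this rigidity to handle \emph{all} non-translational contributions to $V_\omega$ --- the nullity of $\phi$ can a priori exceed the $6$-dimensional Killing-derived space --- and the identity $d((X\times N)\cdot dX)=-2\,dA_g$ is the key driver of Ros's original argument.
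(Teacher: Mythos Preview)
Your argument for the first identity via Bochner and the Gauss formula $\nabla dX^i = N^i\, II$ is correct and is essentially how the identity is established.

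For the dimension bound, the gap you flag is real and your route does not close it. The paper does not reprove this part but defers to Ros, whose argument is purely local and bypasses the Jacobi kernel entirely. Working in the Gauss-map coordinate $w=\phi$ (on the dense set where $d\phi\ne 0$) and writing a harmonic form as $\omega=\mathrm{Re}(F\mu)$ with $\mu$ the Weierstrass height differential and $F$ holomorphic in $w$, a direct computation in Weierstrass data reduces $\langle\nabla\omega,II\rangle=0$ to
\[
(1+|w|^2)\,\mathrm{Re}\Bigl(\tfrac{dF}{dw}\Bigr)=2\,\mathrm{Re}(\bar w F).
\]
Applying $\partial_{\bar w}$ and then $\partial_w$, and using holomorphicity of $F$, forces $wF''-F'$ to be a purely imaginary constant; feeding this back in yields $F=B-icw+\bar Bw^2$ with $B\in\mathbb C$ and $c\in\mathbb R$, a $3$-real-dimensional family which is exactly $\mathrm{span}_{\mathbb R}\{*dX^1,*dX^2,*dX^3\}$, i.e.\ your forms $\omega_\xi$. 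Unique continuation then gives $\dim\mathcal L\le 3$ globally; in the branched case the inequality can be strict because the $*dX^i$ need not lie in $\mathcal H(B)$.

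So the difference is structural: Ros solves the first-order constraint $\langle\nabla\omega,II\rangle=0$ \emph{directly} on harmonic $1$-forms, whereas your strategy passes through $\ker L_{\phi,g}$ and then tries to pull back --- which would require knowing that every tangential vector field with Jacobi components arises from some $\omega_\xi$, a statement equivalent to what you are trying to prove. Your observation that $(X\times N)\cdot dX$ is not closed is correct but plays no role in Ros's argument.
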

\begin{remark}
In~\cite[Lemma 1]{Ros} Ros proved this proposition for minimal surfaces without branched points. In particular, he shows that the space of $\omega$ such that $\langle\nabla\omega,II\rangle = 0$ is exactly three-dimensional. The proof is purely local and, thus, yields the statement of Proposition~\ref{Ros_proposition}. The reason the dimension of $\mathcal L$ might be less than $3$ is that there is no control over the behaviour of $\omega$ in the neighbourhood of the branched points, i.e. the harmonic forms from the argument of Ros might not lie in $\mathcal H(B)$.
\end{remark}

The idea now is to use the components of $V_\omega$, $\omega\in\mathcal H(B)$ as test functions for the quadratic form $Q_\phi$. Assume the contrary to the statement of Proposition~\ref{Immersionprop}, i.e. that $3\ind(\phi)<\dim\mathcal H(B)-3$. Let $U^-_\phi$ be the negative space of $Q_\phi$, $\dim U^-_\phi=\ind(\phi)$. Then by our assumption there exists $\omega\in\mathcal H(B)\backslash\mathcal L$ such that $V_\omega^i\perp U^-_\phi$ for $i=1,2,3$.
Then one has 
$$
\sum_{i=1}^3Q_\phi(V_\omega^i,V_\omega^i)\geqslant 0. 
$$
Let $\psi_\varepsilon$ be a logarithmic cut-off function which is equal to $1$ outside an $\varepsilon$-small neighbourhood of branch points and $\int|\nabla\psi_\varepsilon|_{g_0}^2\,dv_{g_0}\to 0$ as $\varepsilon\to 0$.
Since $V^i_\omega$ are smooth one has $Q_\phi(V_\omega^i,\psi_\varepsilon V_\omega^i)\to Q_\phi(V_\omega^i,V_\omega^i)$ as $\varepsilon\to 0$. Then by Proposition~\ref{Ros_proposition} one has
\begin{equation*}
\begin{split}
\sum_{i=1}^3Q_\phi(V_\omega^i,\psi_\varepsilon V_\omega^i) =& \int\limits_M\sum_{i=1}^3 (\Delta_gV^i_\omega + 2KV^i_\omega)\psi_\varepsilon V^i_\omega\,dv_g = \\&-2\int\limits_M\langle\nabla\omega,II\rangle\sum_{i=1}^3\psi_\varepsilon N^i V^i_\omega\,dv_g = 0,
\end{split}
\end{equation*}
since $\sum_{i=1}^3 N^i V_\omega^i = 0$. Passing to the limit $\varepsilon\to 0$ we conclude that $V_\omega^i\in\ker L_{g,\phi}$ which is a contradiction, since $\omega\not\in\mathcal L$.
%
%
\end{proof}

\section{Proof of Theorem~\ref{YYth}}
\label{Proof_section}
In this section we make use of Proposition~\ref{Immersionprop} in order to prove the following proposition.
\begin{proposition}
\label{S2th}
Let $\phi\colon M\to\mathbb{S}^2$ be a holomorphic map of degree $d_\gamma = \left[\frac{\gamma+3}{2}\right]$. If $\gamma>2$ then $\ind(\phi)>1$.
\end{proposition}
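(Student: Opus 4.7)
The plan is to apply Proposition~\ref{Immersionprop}. Given $\phi\colon M\to\mathbb{S}^2$ holomorphic of degree $d_\gamma$, I would construct a branched multivalued minimal immersion $X\colon\widetilde M\to\mathbb{R}^3$ with Gauss map $\phi$ and branching divisor $B$; the proposition then yields $\ind(\phi)\geq (2h^0(K-B)-3)/3$, and this bound already forces $\ind(\phi)\geq 2$ as soon as $h^0(K-B)\geq 4$. So the whole proof reduces to producing one such $X$ for which $h^0(K-B)\geq 4$.

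For the construction I would invoke the Weierstrass representation. If $g$ is the composition of $\phi$ with stereographic projection, a meromorphic function on $M$ with polar divisor $P_g$ of degree $d_\gamma$, then any holomorphic $1$-form $\eta\in H^0(M,K-2P_g)$ yields a Weierstrass triple $\alpha=\bigl(\tfrac{1-g^2}{2},\tfrac{i(1+g^2)}{2},g\bigr)\eta$ and an immersion $X=\mathrm{Re}\int\alpha$ with Gauss map $\phi$ and branching divisor $B=(\eta)-2P_g$. In particular $K-B\sim 2P_g$, so $h^0(K-B)=h^0(2P_g)$, which turns the geometric question into an algebro-geometric one about the pencil generated by $\phi$.

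To bound $h^0(2P_g)$ I would invoke Riemann-Roch: $h^0(2P_g)=2d_\gamma-\gamma+1+h^0(K-2P_g)$. For odd $\gamma>2$ the leading term already equals $4$, so the estimate is automatic; for even $\gamma$ the leading term is only $3$, but the very hypothesis that a nontrivial $\eta$ exists gives $h^0(K-2P_g)\geq 1$, so one still obtains $h^0(2P_g)\geq 4$. Feeding this into Proposition~\ref{Immersionprop} gives $\ind(\phi)\geq\lceil 5/3\rceil=2$, as required.

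The hard part will be producing the Weierstrass datum $\eta$ in the first place. Gauss-Bonnet for the conical metric $X^*g_{\mathbb{R}^3}$ forces $\deg B=2(\gamma-1-d_\gamma)$, which is negative when $\gamma=3$; in that case no branched multivalued minimal immersion into $\mathbb{R}^3$ with a degree $3$ Gauss map exists, and $\gamma=3$ will have to be handled by a separate argument, likely a direct Li-Yau/Hersch-type conformal-volume estimate on the pullback metric $\phi^*g_{\mathbb{S}^2}$, exploiting that a non-hyperelliptic genus-$3$ curve is a plane quartic and $\phi$ is projection from a point. For $\gamma\geq 4$ the degree of $K-2P_g$ is non-negative, but effectivity of that class is not automatic when $\phi$ corresponds to a non-special pencil, so one must leverage the algebro-geometric properties of curves carrying a $g^1_{d_\gamma}$---via Brill-Noether theory and the theory of special divisors---to secure $\eta$ or an appropriate substitute construction. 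I expect this algebro-geometric input, and not the analytic step, to be the principal technical obstacle.
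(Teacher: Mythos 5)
Your overall plan---run Proposition~\ref{Immersionprop} through a Weierstrass representation with data $(\phi,\eta)$, $\eta\in H^0(M,K-2P_\phi)$, so that $K-B\sim 2P_\phi$ and the index bound becomes $\ind(\phi)\geqslant (2h^0(2P_\phi)-3)/3$---is exactly the analytic half of the paper's proof, and your Riemann-Roch bookkeeping is correct. But the argument as written has a genuine gap, and you have correctly located it yourself: you never produce the Weierstrass datum $\eta$, i.e.\ you never show $h^0(K-2P_\phi)\geqslant 1$, and in fact this fails for most $\phi$. The claim ``the very hypothesis that a nontrivial $\eta$ exists gives $h^0(K-2P_g)\geqslant 1$'' is circular; for a Brill-Noether generic curve of even genus $\gamma$ and a general pencil $P_\phi$ of degree $d_\gamma$, one has $h^0(2P_\phi)=3$ and $h^0(K-2P_\phi)=0$, so no $\eta$ exists at all. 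Likewise for odd $\gamma$: the Riemann-Roch identity forces $h^0(2P_\phi)\geqslant 4$ automatically, but that does not give you $\eta$ (e.g.\ $\gamma=5$, where $\deg(K-2P_\phi)=0$), so your odd-genus case is not actually ``automatic.'' In short, the minimal-surface machine simply cannot be run on every $\phi$.

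The paper avoids this by not trying to. It first invokes Theorem~\ref{uniquepencil} (Montiel--Ros): if $\ind(\phi)=1$ then $\mathcal V_\phi$ is the \emph{unique} pencil of minimal degree on $M$. Brill-Noether theory then kills the odd-genus case outright (there is always a positive-dimensional family of $g^1_{d_\gamma}$'s) and the generic even-genus case (a Brill-Noether general curve carries $N_\gamma>1$ pencils of degree $d_\gamma$). This is why your separate worry about $\gamma=3$---and the proposed conformal-volume patch---is unnecessary: $\gamma=3$ is odd, so it is already handled by uniqueness plus Brill-Noether, with no minimal surface in sight. The only case left is an even-genus curve sitting on the Brill-Noether ramification locus, where the pencil is genuinely unique, and \emph{there} the paper supplies the missing algebraic input: a theorem of Coppens (Proposition~\ref{AGprop}) saying that a unique minimal-degree pencil $|D|$ which arises as a limit of distinct pencils satisfies $h^0(2D)>3$, hence by Riemann-Roch $h^0(K-2P_\phi)>0$, which is exactly the existence of your $\eta$. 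So the algebro-geometric obstacle you anticipated is real, but it is surmounted by a specific non-elementary result you did not have, and it only needs to be surmounted on a small locus in moduli space; the rest of the statement follows from the Montiel--Ros uniqueness theorem, which your proposal does not use.
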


\subsection{Laplace eigenvalues.}
\label{prelim}
In this section we take the viewpoint of Section~\ref{induced_section} and identify $\ind(\phi)$ with the number $N_{\phi^*g_{\mathbb{S}^2}}(2)$.
This way, Theorem~\ref{YYth} is an easy consequence of Proposition~\ref{S2th}.
\begin{proof}[Proof of Theorem~\ref{YYth}]
For any Riemann surface $M$ there exists a holomorphic map $\phi\colon M\to\mathbb{S}^2$ of degree at most $d_\gamma$, see~\cite{GH}. By~\cite[Proposition 3.3]{CKM} (see also~\cite{ESI, LiYau}) for any metric $g$ compatible with the complex structure one has 
\begin{equation}
\label{equality:CKM}
\bar\lambda_1(M,g)\leqslant 8\pi \deg(\phi);
\end{equation}
the equality holds iff there exists a conformal automorphism $\sigma$ of $\mathbb{S}^2$ such that 
$g$ is homothetic to $(\sigma\circ\phi)^*g_{\mathbb{S}^2}$ and the components of $\sigma\circ\phi$ are the first eigenfunctions of $g$. Since $\deg(\phi)\leqslant d_\gamma$, the equality in Yang-Yau inequality~\eqref{YYineq} occurs iff 
 $\deg(\phi)=d_\gamma$ and the inequality~\eqref{equality:CKM} is an equality.
The latter implies that $\ind(\sigma\circ\phi) = 1$, which in the case $\gamma>2$ contradicts Proposition~\ref{S2th} since $\deg(\sigma\circ\phi) = \deg(\phi)=d_\gamma$. 
\end{proof}

The rest of this section is devoted to the proof of Proposition~\ref{S2th}. 
We start with the following theorem. It was proved by Montiel and Ros in~\cite{MR}. A more direct proof appears in~\cite[Proposition 3.4]{CKM}.
\begin{theorem}
\label{uniquepencil}
Let $\phi\colon M\to\mathbb{S}^2$ be a holomorphic map of index $1$. Then for any other holomorphic map $\psi\colon M\to\mathbb{S}^2$ one has $\deg(\psi)\geqslant\deg(\phi)$. Moreover, $\deg(\psi)=\deg(\phi)$ iff there exists a conformal automorphism $\sigma$ of $\mathbb{S}^2$ such that $\psi = \sigma\circ \phi$.
\end{theorem}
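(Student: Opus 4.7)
The approach is the classical Montiel--Ros strategy~\cite{MR}: use the three components of $\psi$ as test functions for the quadratic form $Q_\phi$, after first Hersch-balancing $\psi$ against the (unique) negative eigenfunction of $L_{\phi,g}$. Fix any smooth metric $g$ in the conformal class. Since $\psi=(\psi_1,\psi_2,\psi_3)\colon M\to\mathbb{S}^2\subset\mathbb{R}^3$ is holomorphic, hence harmonic, its components satisfy $\Delta_g\psi_i=|\nabla\psi|_g^2\psi_i$; together with $\sum_i\psi_i^2\equiv 1$ and the conformal identity $\int_M|\nabla\psi|_g^2\,dv_g=8\pi\deg(\psi)$, integration by parts gives the key identity
\begin{equation*}
\sum_{i=1}^{3}Q_\phi(\psi_i)=\int_M\bigl(|\nabla\psi|_g^2-|\nabla\phi|_g^2\bigr)\,dv_g=8\pi\bigl(\deg(\psi)-\deg(\phi)\bigr).
\end{equation*}

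Now I use the hypothesis $\ind(\phi)=1$. Let $u_0>0$ be the first eigenfunction of $L_{\phi,g}$, corresponding to its single negative eigenvalue. By Hersch's balancing trick applied to the probability measure $u_0\,dv_g/\int_M u_0\,dv_g$, there is a conformal automorphism $\sigma$ of $\mathbb{S}^2$ such that $\tilde\psi:=\sigma\circ\psi$ satisfies $\int_M\tilde\psi_i\,u_0\,dv_g=0$ for $i=1,2,3$. Since $L_{\phi,g}$ is self-adjoint on $L^2(dv_g)$ and $u_0$ spans its negative spectral subspace, each $\tilde\psi_i$ lies in the nonnegative spectral subspace, so $Q_\phi(\tilde\psi_i)\geq 0$. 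Noting $\deg(\tilde\psi)=\deg(\psi)$ and applying the key identity yields $\deg(\psi)\geq\deg(\phi)$.

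For the equality case, assume $\deg(\psi)=\deg(\phi)$. Then $\sum_iQ_\phi(\tilde\psi_i)=0$ with each summand nonnegative, so each $Q_\phi(\tilde\psi_i)=0$; combined with $\tilde\psi_i\perp u_0$, this forces $\tilde\psi_i\in\ker L_{\phi,g}$, i.e.\ $\Delta_g\tilde\psi_i=|\nabla\phi|_g^2\tilde\psi_i$. Matching this against $\Delta_g\tilde\psi_i=|\nabla\tilde\psi|_g^2\tilde\psi_i$ and using $\sum_i\tilde\psi_i^2\equiv 1$ gives $|\nabla\tilde\psi|_g^2=|\nabla\phi|_g^2$ pointwise, hence $\tilde\psi^*g_{\mathbb{S}^2}=\phi^*g_{\mathbb{S}^2}$. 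In particular, $\phi$ and $\tilde\psi$ share their branch divisor and are local isometries $(M,\phi^*g_{\mathbb{S}^2})\to(\mathbb{S}^2,g_{\mathbb{S}^2})$ away from it.

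The final rigidity step --- passing from equal pullback metrics to Möbius equivalence --- is what I expect to be the main obstacle. On a small open set $U\subset M$ avoiding branch points, $\phi|_U$ is a biholomorphism onto its image, and $\tilde\psi\circ(\phi|_U)^{-1}$ is a holomorphic local isometry of $\mathbb{S}^2$; by rigidity of isometries of $\mathbb{S}^2$ it extends uniquely to a global orientation-preserving isometry $\tau\in\mathrm{PSU}(2)$. Since both $\tau\circ\phi$ and $\tilde\psi$ are holomorphic maps agreeing on $U$, they agree on all of $M$ by analytic continuation, yielding $\psi=\sigma^{-1}\circ\tau\circ\phi$. The delicate points are ensuring that the branch divisors match (which follows from $|\nabla\tilde\psi|_g^2=|\nabla\phi|_g^2$) and that holomorphicity of both maps upgrades the local isometry of $\mathbb{S}^2$ to an element of $\mathrm{PSU}(2)$ rather than an arbitrary element of $O(3)$.
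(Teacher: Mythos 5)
Your proof is correct and it is essentially the Montiel--Ros argument from \cite{MR}, which is precisely the reference the paper cites for this result; the paper itself does not reproduce a proof, but refers the reader to \cite{MR} and to the more direct proof in \cite[Proposition 3.4]{CKM}. Each step checks out: the identity $\sum_i Q_\phi(\psi_i)=8\pi(\deg\psi-\deg\phi)$ follows from the harmonic map equation $\Delta_g\psi_i=|\nabla\psi|^2_g\psi_i$, from $\sum\psi_i^2\equiv1$, and from $\int|\nabla\psi|^2_g\,dv_g=2E(\psi)=8\pi\deg\psi$ for a conformal map; Hersch balancing against the positive density $u_0\,dv_g$ is legitimate because $\psi_*(u_0\,dv_g)$ is atomless (fibers of a nonconstant holomorphic map are finite and $u_0\,dv_g$ is absolutely continuous); orthogonality to the one-dimensional negative subspace together with $Q_\phi(\tilde\psi_i)=0$ places each $\tilde\psi_i$ in $\ker L_{\phi,g}$; and the pointwise identity $|\nabla\tilde\psi|^2_g=|\nabla\phi|^2_g$ follows because at every point some $\tilde\psi_i$ is nonzero. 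The final rigidity step is also fine: a holomorphic local isometry of $(\mathbb{S}^2,g_{\mathbb{S}^2})$ extends (by the standard extension theorem for local isometries of a simply connected complete space form) to a global isometry which, being orientation-preserving, lies in $\mathrm{PSU}(2)$; analytic continuation then upgrades $\tau\circ\phi=\tilde\psi$ from $U$ to all of $M$.
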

 
Using the language of algebraic geometry introduced in Section~\ref{AG_notations}, the previous theorem takes the following form.
 
\begin{theorem}
\label{ind1}
Let $\phi$ be a meromorphic function on $M$ and let $\mathcal V_\phi$ be the associated pencil. If $\ind(\phi) = 1$, then $\mathcal V_\phi$ is the unique pencil on $M$ of degree $\deg(\phi)$ and there are no pencils on $M$ of smaller degree. 
 \end{theorem}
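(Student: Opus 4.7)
The plan is to translate Theorem~\ref{uniquepencil} directly into the algebro-geometric language via the correspondence between base-point free pencils and holomorphic maps recalled in Section~\ref{AG_notations}. The only subtle point is handling an arbitrary base locus.

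Let $\mathcal{V}$ be any pencil on $M$ of degree $\deg\mathcal{V}\leqslant \deg(\phi)$; write $D$ for a divisor representing it, so $\mathcal{V}=\mathbb{P}V$ with $V\subset H^0(M,D)$ two-dimensional. Let $B$ be the base locus of $\mathcal{V}$, so that $\mathcal{V}$ determines a base-point free pencil $\mathcal{V}'\subset \mathbb{P}H^0(M,D-B)$ of degree $\deg\mathcal{V}-\deg B$. By the discussion in Section~\ref{AG_notations}, $\mathcal{V}'=\mathcal{V}_\psi$ for a unique holomorphic map $\psi\colon M\to\mathbb{CP}^1=\mathbb{S}^2$, and $\deg(\psi)=\deg\mathcal{V}-\deg B$.

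Now apply Theorem~\ref{uniquepencil} to $\phi$ and $\psi$: since $\ind(\phi)=1$, we must have $\deg(\psi)\geqslant\deg(\phi)$. Combining this with the chain
\[
\deg(\phi)\leqslant\deg(\psi)=\deg\mathcal{V}-\deg B\leqslant\deg\mathcal{V}\leqslant\deg(\phi),
\]
all of these are equalities. In particular $\deg B=0$, so $B=0$ and $\mathcal{V}$ is already base-point free with $\mathcal{V}=\mathcal{V}_\psi$ and $\deg\mathcal{V}=\deg(\phi)$. This rules out pencils of degree strictly smaller than $\deg(\phi)$.

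For uniqueness, the equality case of Theorem~\ref{uniquepencil} gives a conformal automorphism $\sigma$ of $\mathbb{CP}^1$ with $\psi=\sigma\circ\phi$. Since $\sigma$ is a bijection of $\mathbb{CP}^1$, the fibers of $\sigma\circ\phi$ coincide with the fibers of $\phi$ (only the parametrization by points of $\mathbb{CP}^1$ is changed), so the two pencils agree as collections of effective divisors: $\mathcal{V}_\psi=\mathcal{V}_{\sigma\circ\phi}=\mathcal{V}_\phi$. Therefore $\mathcal{V}=\mathcal{V}_\phi$, which completes the proof. There is no real obstacle here beyond verifying that the base-locus-removal procedure preserves degrees and that automorphisms of the target act trivially on the pencil of fibers; both are routine consequences of the definitions in Section~\ref{AG_notations}.
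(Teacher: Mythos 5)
Your proof is correct and fills in precisely the translation that the paper leaves implicit: the paper states Theorem~\ref{ind1} as nothing more than a reformulation of Theorem~\ref{uniquepencil} in algebro-geometric language, without spelling out the correspondence between pencils and holomorphic maps or the base-locus reduction. Your handling of the base locus and of the automorphism in the equality case is exactly the intended argument, so this is the same approach made explicit.
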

 \begin{remark}
 Note that the pencil of minimal degree is automatically a base-point free complete linear series, i.e. if $\deg(\phi)$ is a minimal possible degree of a meromorphic function on $M$ then $\mathcal V_\phi = |P_\phi|$.
 \end{remark}
 
Brill-Noether theory~\cite{GH} is concerned with questions related to existence of linear series on a Riemann surface $M$. Let us recall the statements of this theory related to pencils and discuss their consequences. We refer to~\cite{KleimanLaksov} for the particular case of pencils. Note that a similar discussion appears in~\cite[Section 2.2]{Ros}.
 \begin{itemize}
\item[1)] For any Riemann surface $M$ of genus $\gamma$ there exists a pencil of degree at most $d_\gamma$. Therefore, any $\phi$ with $\deg(\phi)>d_\gamma$ is of index at least $2$.
\item[2)] If genus $\gamma$ is odd then there is at least one-dimensional family of pencils of degree $d_\gamma$. Therefore, Proposition~\ref{S2th} holds for odd genera.
\item[3)] If genus $\gamma$ is even, then a generic Riemann surface of genus $\gamma$ has exactly $N_\gamma$ pencils of degree $d_\gamma$. Moreover, one has $N_2=1$ and $N_\gamma>1$ for $\gamma>2$. Therefore, by Theorem~\ref{uniquepencil} one has that Proposition~\ref{S2th} holds for {\em generic} Riemann surface of even genus. Generic here means non-empty Zariski open subset of moduli space of Riemann surfaces of genus $\gamma$, or, equivalently, the complement to an analytic set of codimension at least $1$.
 \end{itemize}
 
 To summarize, we obtain that the classical Brill-Noether theory in combination with Proposition~\ref{uniquepencil} imply Proposition~\ref{S2th} in the following two cases: 1) for {\em any} Riemann surface of odd genus; 2) for {\em generic} Riemann surfaces of even genus $\gamma>2$.  In the remainder of this section we deal with non-generic complex structures on surfaces of even genus $\gamma$. 
%
\subsection{Non-generic Riemann surfaces}
\label{Nongeneric}
The following proposition was proved in~\cite{Coppens_thesis}. A sketch of the argument appears in the appendix to~\cite{Coppens1}.
  \begin{proposition}
 \label{AGprop}
 Let $|D|$ be a pencil. Then $h^0(2D)>3$ iff $|D|$ is a limit of two different pencils on the moduli space of curves.
 \end{proposition}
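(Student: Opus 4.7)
The plan is to prove both directions separately: one via the base-point-free pencil trick combined with semicontinuity of cohomology, the other via a Petri map computation of the Zariski tangent space to $W^1_d(M)$. I assume throughout that $|D|$ is base-point-free; otherwise one passes to the movable part $|D-B|$ before applying the argument.

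\emph{Proof of $(\Leftarrow)$.} Suppose $|D|$ is the limit of two distinct complete pencils $|D_t|,|D'_t|$ on a flat one-parameter family of curves $M_t$ with $M_0=M$. Since each of $|D_t|$ and $|D'_t|$ has $h^0=2$ and they are distinct as linear systems, the underlying line bundles must be non-isomorphic: two different $2$-dimensional subspaces of a common $H^0$ would force $h^0\geqslant 3$, contradicting that each is a pencil. Thus $D'_t-D_t$ is a non-trivial degree-zero divisor class and $h^0(M_t,D'_t-D_t)=0$. The base-point-free pencil trick applied to $|D_t|$ identifies the kernel of the multiplication
\[
H^0(M_t,D_t)\otimes H^0(M_t,D'_t)\longrightarrow H^0(M_t,D_t+D'_t)
\]
with $H^0(M_t,D'_t-D_t)=0$, so the map is injective. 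The source has dimension $4$, hence $h^0(M_t,D_t+D'_t)\geqslant 4$. Since $D_t+D'_t$ specializes to $2D$ on $M$, upper semicontinuity of cohomology in flat families gives $h^0(M,2D)\geqslant 4$.

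\emph{Proof of $(\Rightarrow)$.} Suppose $h^0(2D)\geqslant 4$. By a standard computation in Brill-Noether theory, the Zariski tangent space to $W^1_d(M)$ at $L=\mathcal{O}_M(D)$ is the annihilator in $H^1(\mathcal{O}_M)$ of the image of the Petri map $\mu_0\colon H^0(L)\otimes H^0(K_M\otimes L^{-1})\to H^0(K_M)$. The base-point-free pencil trick gives $\ker\mu_0\cong H^0(K_M-2D)$, so
\[
\dim T_LW^1_d(M)=\rho(\gamma,1,d)+h^0(K_M-2D),
\]
and combining Riemann-Roch with Serre duality simplifies the right-hand side to $h^0(2D)-3\geqslant 1$. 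Hence $L$ is either contained in a positive-dimensional component of $W^1_d(M)$, or is a non-reduced fat point of multiplicity at least $2$ in a zero-dimensional component. In the first case any path in the positive-dimensional component supplies two distinct pencils on $M$ itself both limiting to $|D|$. In the second case I consider the relative Brill-Noether scheme $\mathcal{W}^1_d\to\mathcal{M}_\gamma$: on a generic curve $M_t$ near $M$ the fiber is reduced and zero-dimensional (by the Brill-Noether theorem together with $\rho(\gamma,1,d)=0$), so by conservation of length the fat point at $|D|$ must split into at least two distinct simple pencils $|D_t|,|D'_t|$ on $M_t$, both converging to $|D|$ as $t\to 0$. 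The main obstacle is the conservation-of-length argument in this second case, which tacitly requires that $M$ admits a smoothing through Brill-Noether-generic curves in $\mathcal{M}_\gamma$, together with the careful bookkeeping needed to derive the clean identity $\dim T_LW^1_d(M)=h^0(2D)-3$.
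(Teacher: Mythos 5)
The paper does not actually prove this proposition: it cites Coppens's thesis for the full statement and only gives (in a remark) an informal sketch of the ``if'' direction, which is the only direction used later. Your $(\Leftarrow)$ argument is a correct rendering of that sketch. The paper exhibits the four sections $1,\phi_t,\psi_t,\phi_t\cdot\psi_t$ of $H^0(\Sigma_t,D_1^t+D_2^t)$ (the manuscript has a typo, writing $\phi_t+\psi_t$, which is redundant) and then invokes upper semi-continuity; you get $h^0(M_t,D_t+D'_t)\geqslant 4$ via the base-point-free pencil trick and then invoke the same semi-continuity. These are the same idea, and your version is arguably cleaner since it sidesteps the question of why the four explicit sections are independent. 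One gap worth flagging even here: you need $|D|$ base-point-free for the BPF pencil trick, and you assert one may simply replace $D$ by $D-B$, but that changes the degree and hence the Brill--Noether stratum, so the reduction deserves a sentence. Also, your deduction that $D_t\not\cong D'_t$ uses that the limiting pencils are complete; this is true because $h^0(\mathcal{O}(D))=2$ forces $h^0$ of the nearby line bundles to equal $2$ by semicontinuity, but you should say so rather than treat it as automatic.

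Your $(\Rightarrow)$ direction goes beyond what the paper provides. The Petri-map tangent-space identity $\dim T_L W^1_d=\rho+h^0(K-2D)=h^0(2D)-3$ is correct (again under $h^0(D)=2$ exactly and base-point-freeness), but the concluding case analysis is more delicate than presented. First, the clean dichotomy ``positive-dimensional component vs.\ fat point with conservation of length'' implicitly assumes $\rho(\gamma,1,d)=0$, i.e.\ $\gamma$ even and $d=\lfloor(\gamma+3)/2\rfloor$; for other $(\gamma,d)$ the expected dimension of $W^1_d$ is positive and the argument must be reformulated relative to $\mathcal{M}_\gamma$ rather than on a single curve. Second, the conservation-of-length step needs the relative scheme $\mathcal{W}^1_d\to\mathcal{M}_\gamma$ to be finite and flat near $M$, and $\mathcal{M}_\gamma$ is only a coarse moduli space, so one should work in a versal deformation; you acknowledge this but do not resolve it. The paper sidesteps all of this by deferring $(\Rightarrow)$ to Coppens, and indeed only ever uses $(\Leftarrow)$, so from the standpoint of reproducing the paper's argument your $(\Leftarrow)$ is the relevant part and it is essentially the same route.
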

 \begin{remark}
For clarity of exposition, Proposition~\ref{AGprop} is stated rather informally. The rigorous form of this statement can be found in~\cite[p. 65]{Coppens_thesis}. 
For the sake of understanding the statement of Proposition~\ref{AGprop}, one could think of "a limit of two different pencils" as the fact that there exists a holomorphic family $\Sigma_t$ of Riemann surfaces parametrized by $t\in \mathbb{D}\subset C$, such that $\Sigma_0 = \Sigma$, and two families $\mathcal V^t_1\not\equiv\mathcal  V^t_2$ of pencils on $\Sigma_t$ such that $|D|=\mathcal V^0_1=\mathcal  V^0_2$. 
 \end{remark}
 \begin{remark}
 Below, we only make use of "if"-part of Proposition~\ref{AGprop}. As it is remarked in~\cite{Coppens1,Coppens_thesis}, the "if"-part easily follows from the semi-continuity theorem~\cite{Hartshorne}. The following informal argument clarifies the main idea. Assume $\phi_t\in H^0(\Sigma_t, D_1^t)$, where $D_1^t\in\mathcal V^t_1$ and, similarly, $\psi_t\in H^0(\Sigma_t, D_2^t)$, where $D_2^t\in\mathcal V^t_2$. Consider $F_t=D_1^t+D_2^t$. On one hand, $|F_0|=|2D|$. On the other hand, $\mathrm{span}\{1,\phi_t,\psi_t,\phi_t+\psi_t\}\subset H^0(\Sigma_t, F_t)$, i.e. $h^0(\Sigma_t,F_t)>3$. The semi-continuity theorem asserts that $h^0(\Sigma_t,F_t)$ is an upper semi-continuous function of $t$ and, therefore, $h^0(\Sigma, 2D) = h^0(\Sigma_0, 2F_0)>3$. 
 \end{remark}
 As a corollary we obtain the following statement.
 \begin{corollary}
 \label{nongeneric:cor}
 Suppose that $M$ is a Riemann surface of genus $\gamma>2$ that has a unique pencil $|D|$ of degree $d_\gamma$ and no pencils of smaller degree. Then $h^0(2D)>3$.
 \end{corollary}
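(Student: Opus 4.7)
The plan is to degenerate $M$ from a Brill-Noether generic curve of the same genus and then invoke Proposition~\ref{AGprop}. First I would dispose of the odd genus case: for odd $\gamma$, the Brill-Noether number for pencils of degree $d_\gamma$ equals $\rho = 2d_\gamma - \gamma - 2 = 1$, so every curve of genus $\gamma$ carries a positive-dimensional family of such pencils, contradicting the uniqueness hypothesis. Hence $\gamma$ is even, $d_\gamma = (\gamma+2)/2$, and the number of pencils of degree $d_\gamma$ on a Brill-Noether generic curve equals the Castelnuovo number $N_\gamma = C_{\gamma/2}$ (the $(\gamma/2)$-th Catalan number), which is at least $2$ whenever $\gamma \geq 4$.

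Next I would embed $M$ in a one-parameter holomorphic family $\pi \colon \mathcal{X} \to \Delta$ of smooth genus-$\gamma$ curves with $\mathcal{X}_0 = M$ and $\mathcal{X}_t$ Brill-Noether generic for $t \neq 0$; such a deformation exists because the Brill-Noether generic locus is Zariski-open in the moduli space. The relative Brill-Noether scheme $W = W^1_{d_\gamma}(\mathcal{X}/\Delta) \subset \mathrm{Pic}^{d_\gamma}(\mathcal{X}/\Delta)$ is proper over $\Delta$, and its fiber $W_t$ over a generic $t \neq 0$ consists of $N_\gamma$ reduced points.

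The core observation is that every limit as $t \to 0$ of the pencils on $\mathcal{X}_t$ is again a pencil of degree $d_\gamma$ on $M$: the minimality hypothesis that $M$ has no pencils of degree less than $d_\gamma$ prevents any limit from splitting off a base divisor and dropping in effective degree, and uniqueness then forces every such limit to coincide with $|D|$. Conservation of length in a proper flat family of zero-dimensional schemes therefore yields $\mathrm{length}_{|D|}(W_0) \geq N_\gamma \geq 2$. After a finite base change $\Delta' \to \Delta$ ramified over the origin to separate branches, this multiplicity bound produces two holomorphic sections $s_1, s_2 \colon \Delta' \to W$ with $s_1(t') \neq s_2(t')$ for $t' \neq 0$ but $s_1(0) = s_2(0) = |D|$. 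This exhibits $|D|$ as the limit of two distinct pencils along a deformation of curves, precisely the hypothesis of Proposition~\ref{AGprop}; its ``if'' direction (sketched in the remark via semicontinuity of $h^0$ applied to the sum divisor) then gives $h^0(2D) > 3$.

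The hard part will be controlling the limit in the Brill-Noether family: verifying that the $N_\gamma$ pencils on nearby fibers really do specialise to honest degree-$d_\gamma$ pencils on $M$ (rather than to limit linear series with a base locus), and that all of their scheme-theoretic multiplicity concentrates at $|D|$. Both points rely on standard properness and length-conservation statements for $W^1_{d_\gamma}$, but the no-smaller-pencil hypothesis on $M$ enters in an essential way to exclude degree drop in the limit.
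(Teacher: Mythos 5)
Your proposal follows the same strategy as the paper's own proof: deform $M$ to Brill--Noether generic curves of the same genus, exhibit $|D|$ as the common limit of two distinct degree-$d_\gamma$ pencils on the nearby fibers (using $N_\gamma > 1$ for even $\gamma > 2$), and then invoke the ``if'' direction of Proposition~\ref{AGprop}. The only difference is the technical packaging: the paper works with the universal $\mathcal G^1_{d_\gamma}$ over the Zariski-open locus of curves with finitely many such pencils, quotes its smoothness from~\cite[Ch.~XXI, Prop.~6.8]{ACGH2} to exhibit it as a degree-$N_\gamma$ branched cover ramified at $M$, and reads off the two colliding pencils there, whereas you use properness of the relative $W^1_{d_\gamma}$ together with length conservation and a base change --- equivalent standard machinery leading to the same two converging sections.
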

 \begin{proof}
%
For a Riemann surface $\Sigma$, let $\mathcal G_{d_\gamma}^1(\Sigma)$ be the space of all pencils of degree $d_\gamma$ on $\Sigma$. By the upper semi-continuity of dimension, see~\cite{Hartshorne}, $U = \{\Sigma\in\mathcal M_\gamma\,|\, \dim\mathcal G_{d_\gamma}^1(\Sigma) = 0\}$ is a Zariski-open subset of $\mathcal M_\gamma$. By Brill-Noether theory, see e.g.~\cite[Chapter XXI]{ACGH2}, $U$ is non-empty. Let $\mathcal G_{d_\gamma}^1(U)=\{(\Sigma,\mathcal V)|\,\Sigma\in U,\,\mathcal V\in \mathcal G_{d_\gamma}^1(\Sigma)\}$ be the space of pencils on $U$ and let $\pi\colon \mathcal G_{d_\gamma}^1(U)\to U$ be the natural projection. Furthermore, by~\cite{KleimanLaksov} $V =\{\Sigma\in U\,|\, |\mathcal G_{d_\gamma}^1(\Sigma)| = N_\gamma\}$ is an open subset of $U$, and by~\cite[Chapter XXI, Proposition 6.8]{ACGH2} one has that $\mathcal G_{d_\gamma}^1(U)$ is smooth. Therefore, $\pi$ is a branched covering with $N_\gamma>1$ leaves and ramification locus $U\setminus V$. Since $M\in U\setminus V$, one has that there exists a deformation $M_t$ of $M$ and two pencils $\mathcal V^t_1 \ne \mathcal V^t_2$ on $M_t$ such that the limits of both $(M_t,\mathcal V^t_1)$ and $(M_t,\mathcal V^t_2)$ in $\mathcal G_{d_\gamma}^1(U)$ are equal to $(M,|D|)$. The application of Proposition~\ref{AGprop} completes the proof.
 \end{proof}
 \begin{remark}
 We remark that if $\gamma$ is odd and $D$ is any divisor of degree $d_\gamma$, then by Riemann-Roch theorem one has $h^0(2D)>3$.
 \end{remark}
 
 \subsection{Proof of Proposition~\ref{S2th}}
 In this section we complete the proof of Proposition~\ref{S2th} by establishing the conclusion for non-generic complex structures on surfaces of even genus $\gamma>2$, i.e. we assume that $\mathcal V_\phi = |P_\phi|$ is the unique pencil of degree $d_\gamma$ on $M$ and there are no pencils of smaller degree.
 
 By Corollary~\ref{nongeneric:cor} one has $h^0(2P_\phi)>3$.
By Riemann-Roch theorem one has
 $$
 h^0(K-2P_\phi) = \gamma - 1 - 2\deg(\phi) + h^0(2P_\phi)
 $$
 At the same time, $\gamma$ is even and, thus, $2\deg\phi\leqslant \gamma+2$. As a result, one obtains
 $$
  h^0(K-2P_\phi) >\gamma-1 - (\gamma+2) + 3=0.
 $$
 Let $\omega$ be a non-constant element in $H^0(K-2P_\phi)$ viewed as a holomorphic form with zeroes at poles of $\phi$ such that $\ord_p(\omega)\geqslant 2\mult_pP_\phi$. 
 
At this point we use the Weierstrass representation of minimal surfaces in $\mathbb{R}^3$, see e.~g.~\cite{Osserman}. Namely, we set 
 $$
X(p) = \int^p_{p_0}\mathrm{Re}\left((1+\phi^2)\omega, i(1-\phi^2)\omega, 2\phi\omega\right)
 $$
 to be the multivalued branched minimal immersion of $M$ with Gauss map $\phi$. The branch points of $X$ correspond to points where all three components of the integrand are zero, i.e. to points $p$, where $\ord_p(\omega)> -2\ord_p(\phi)$. Therefore, the branching divisor $B$ of $X$ satisfies $B=K-2P_\phi$. Finally, by Proposition~\ref{Immersionprop} one has
 $$
 \ind(\phi)\geqslant\frac{2h^0(K-(K-2P_\phi))-3}{3} = \frac{2h^0(2P_{\phi})-3}{3}>1
 $$
 by the assumption that $h^0(2P_{\phi})>3$. This completes the proof of Proposition~\ref{S2th}.

\section{Upper bounds on total branching order}
\label{branching_sec}
In this section we investigate the total branching order of harmonic maps to the sphere. In particular, we prove Theorem~\ref{Branching_th}.
\subsection{Harmonic sequence} In the present section we discuss the concept of a {\em harmonic sequence} introduced by Chern and Wolfson in~\cite{CW1,CW2}. We follow the exposition in~\cite{BJRW}.
 
Let $L\subset\mathbb{CP}^n\times\mathbb{C}^{n+1}$ be a tautological bundle over $\mathbb{CP}^n$, i.e $L = \{(l,v)\,|v\in l\}$. Let $M$ be a Riemann surface. There is a correspondence between smooth maps $\psi\colon M\to\mathbb{CP}^n$ and line subbundles of a trivial bundle $M\times \mathbb{C}^{n+1}\to M$ given by $\psi\leftrightarrow\psi^*L$. We endow $M\times\mathbb{C}^{n+1}$ with the usual Hermitian inner product $\langle\cdot,\cdot\rangle$ and the induced Hermitian connection.
Then by Koszul-Malgrange theorem all line subbundles of $M\times\mathbb{C}^{n+1}$ are automatically holomorphic. Moreover, one has the bundle isomorphism $T\mathbb{CP}^n\cong \Hom(L,L^\perp)$. We endow all these bundles with Hermitian connections induced from the Hermitian connection on the trivial $\mathbb{C}$-bundle.
Then differential $d\psi$ could be considered as an element of $\Hom(TM\otimes\psi^*L,\psi^*L^\perp)$ and the $(1,0)$-part of the complexified differential map defines 
\begin{equation}
\label{partial}
\partial\colon T^{(1,0)}M\otimes\psi^*L\to\psi^*L^\perp,
\end{equation}
and the $(0,1)$-part of the complexified differential map defines 
\begin{equation}
\label{barpartial}
\bar\partial\colon  T^{(0,1)}M\otimes\psi^*L\to\psi^*L^\perp
\end{equation}

Let $g$ be any metric compatible with the complex structure on $M$. Assume that $\psi\colon (M,g)\to\mathbb{CP}^n$ is a linearly full (i.e. its image is not contained in the projective subspace) harmonic map, where $\mathbb{CP}^n$ is endowed with the Fubini-Study metric. In local complex coordinates the harmonicity can be expressed as $(\nabla d\psi)(\partial_{\bar z},\partial_z)=(\nabla d\psi)(\partial_{z},\partial_{\bar z})=0$, which is equivalent to the fact that $\partial$ ($\bar\partial$) defined in~\eqref{partial} (in~\eqref{barpartial}) is a(n) (anti-)holomorphic morphism of bundles. Thus their images can be defined across zeroes of $d\psi$ and give rise to line subbundles $L_1$ ($L_{-1}$). Denoting $\psi^*L$ by $L_0$ we have a holomorphic map
$$
\partial_0\colon T^{(1,0)}M\otimes L_0\to L_1
$$
and an antiholomorphic map
$$
\bar\partial_0\colon T^{(0,1)}M\otimes L_0\to L_{-1}
$$

Bundles $L_1$ and $L_{-1}$ correspond to maps $\psi_1,\psi_{-1}\colon M\to\mathbb{CP}^n$. It is proved in~\cite{CW1} that if $\psi_0=\psi$ is harmonic then so are $\psi_1$ and $\psi_{-1}$. Repeating the process one constructs a sequence of bundles $\{L_p\}$, holomorphic maps
$$
\partial_p\colon T^{(1,0)}M\otimes L_p\to L_{p+1}
$$
and antiholomorphic maps
$$
\bar\partial_p\colon T^{(0,1)}M\otimes L_p\to L_{p-1}.
$$
This collection of data is referred to as {\em a harmonic sequence associated to $\psi = \psi_0$}.

If $\partial_p\equiv0$ ($\bar\partial_p\equiv0$) but $\partial_{p-1}\not\equiv0$ ($\bar\partial_{p-1}\not\equiv 0$), then we say that the harmonic sequence terminates with $L_p$ at the right (left). In this case the map $\psi_p$ is antiholomorphic (holomorphic) and the harmonic sequence coincides with its Frenet frame.

For each bundle $L_p$ one can define a smooth bilinear form $\mathcal H_p$ on $L_p$ by restricting $\mathbb{C}$-bilinear dot product from $\mathbb{C}^{n+1}$ to $L_p$. We will use the notation $u\cdot v$ for this product. One can look at $\mathcal H_p$ as a generalization of the famous Hopf differential.

\begin{proposition}
\label{holo_prop}
If $L_{p-1}\perp \bar L_{p}$, then $\mathcal H_p$ is holomorphic section of $(L_p^*)^2$. 
\end{proposition}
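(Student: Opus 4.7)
The plan is to check the Cauchy--Riemann equation for $\mathcal H_p$ in a local holomorphic frame of $L_p$. By Koszul--Malgrange, $L_p$ admits local holomorphic frames; let $s$ be such a frame near a chosen point, with dual frame $s^*$. Writing $\mathcal H_p=(s\cdot s)\,(s^*)^2$ and noting that $(s^*)^2$ is a local holomorphic frame of $(L_p^*)^2$, the proposition reduces to showing that the scalar function $s\cdot s$ is holomorphic, i.e.\ $\bar\partial(s\cdot s)=0$.

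Viewing $s$ as a $\mathbb{C}^{n+1}$-valued function via the inclusion $L_p\subset M\times\mathbb{C}^{n+1}$, the $\mathbb{C}$-bilinearity of the dot product yields
\[
\bar\partial(s\cdot s) = 2\,(\bar\partial s)\cdot s,
\]
where $\bar\partial s$ is computed with the trivial connection on $M\times\mathbb{C}^{n+1}$. The key structural input is that, for a linearly full harmonic map, the $(0,1)$-derivative of a section of $L_p$ has components, in the Hermitian splitting $\bigoplus_q L_q$, only in $L_p$ and $L_{p-1}$: the $L_p$-part is the intrinsic $\bar\partial^{L_p} s$, and the $L_{p-1}$-part is $\bar\partial_p(s\otimes\partial_{\bar z})\,d\bar z$. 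The vanishing of components in $L_q$ for $q\notin\{p,p-1\}$ is precisely the harmonic-sequence property coming from the construction recalled above (cf.\ \cite{CW1,BJRW}). Since $s$ is holomorphic, $\bar\partial^{L_p}s=0$, so $\bar\partial s$ takes values in $L_{p-1}$.

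To finish, the hypothesis $L_{p-1}\perp\bar L_p$ translates, via the identity $\langle u,\bar v\rangle = u\cdot v$, into the pointwise vanishing $u\cdot v=0$ for $u$ a section of $L_{p-1}$ and $v$ a section of $L_p$. Applying this pointwise with $u=\bar\partial s$ and $v=s$ yields $(\bar\partial s)\cdot s=0$, hence $\bar\partial(s\cdot s)=0$. The main conceptual step is the identification $\bar\partial s\in L_{p-1}$; the rest is a bilinearity computation together with the direct unpacking of the orthogonality hypothesis. Potential degeneracies at the isolated zeros of $\partial_p$ or $\bar\partial_p$ cause no trouble because $\mathcal H_p$ is \emph{a priori} a smooth section of $(L_p^*)^2$, so holomorphicity verified on the open dense set where holomorphic frames exist extends by continuity.
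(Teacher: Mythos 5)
Your proof is correct and follows essentially the same route as the paper's: in both cases the point is that the $(0,1)$-derivative of a local section of $L_p$ decomposes (via $\bar\partial_p$) into an $L_p$-component and an $L_{p-1}$-component, and the hypothesis $L_{p-1}\perp\bar L_p$ kills the resulting term in $\bar\partial(\xi\cdot\zeta)$. The only difference is cosmetic: the paper works with arbitrary local sections and the induced connection on $(L_p^*)^2$, while you pick a local holomorphic frame $s$ so that the intrinsic $\bar\partial^{L_p}s$-term disappears outright; note also that your closing caveat is unnecessary, since a Koszul--Malgrange holomorphic line bundle admits local holomorphic frames near every point of $M$, not just on a dense open set.
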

\begin{proof}
One needs to check that $\nabla_{\partial_{\bar z}}\mathcal H_p=0$, where $\nabla$ is the induced connection on $(L_p^*)^2$. Let $\xi$ and $\zeta$ be local sections of $L_p$. 
For a subbundle $V\subset M\times\mathbb{C}^{n+1}$ let $P_V$ denote the orthogonal projection onto $V$.
Then one has
\begin{equation*}
\begin{split}
&(\nabla_{\partial_{\bar z}}\mathcal H_p)(\xi,\zeta) = \partial_{\bar z}(\xi\cdot\zeta) - P_{L_p}(\partial_{\bar z}\xi)\cdot\zeta - \xi\cdot P_{L_p}(\partial_{\bar z}\zeta) \\
&= P_{L^\perp_p}(\partial_{\bar z}\xi)\cdot\zeta + \xi\cdot P_{L^\perp_p}(\partial_{\bar z}\zeta) = \bar\partial_p(\partial_{\bar z}\otimes\xi)\cdot\zeta + \xi\cdot\bar\partial_p(\partial_{\bar z}\otimes\zeta)
\end{split}
\end{equation*}
Since the image of $\bar\partial_p$ is $L_{p-1}$, the condition $L_{p-1}\perp \bar L_{p}$ guarantees that the right hand side vanishes.
\end{proof}

The following are two properties of $\partial_p$ that will be of use in the next section.
\begin{itemize}
\item[1)] The singular points of $\partial_p$ are precisely branching points of $\psi_p$.
\item[2)] The map $\partial_p$ is a holomorphic section of $K\otimes L_p^*\otimes L_{p+1}$ and therefore one has
\begin{equation}
\label{ramification}
r(\partial_p) = c_1(K\otimes L_p^*\otimes L_{p+1}) = 2\gamma-2+c_1(L_{p+1}) - c_1(L_p),
\end{equation}
where $r(\partial_p)\geqslant 0$ is the degree of the zero divisor of $\partial_p$ and is referred to as  {\em ramification index}. It coincides with the total branching order of $\psi_p$. 
\end{itemize}

\subsection{Harmonic sequences for maps to a sphere}
\label{HSS2}
In this section we specify the previous discussion to a particular case of a linearly full harmonic map $\Phi\colon M\to\mathbb{S}^n$. Let $\pi$ be a projection $\pi\colon\mathbb{S}^n\to\mathbb{RP}^n$ and $i$ be an embedding $i\colon\mathbb{RP}^n\to\mathbb{CP}^n$. Since $i$ is totally geodesic, the composition $\psi=i\circ\pi\circ\Phi$ is harmonic. Moreover, $\Phi$ is linearly full iff $\psi$ is linearly full. Let $\{L_i\}$ be the harmonic sequence associated to $\psi$. We note the following.
\begin{itemize}
\item[1)] One has $\langle\Phi,\Phi\rangle = 1$. Therefore, $\langle\partial_z\Phi,\Phi\rangle = \langle\partial_{\bar z}\Phi,\Phi\rangle = 0$, i.e. $\Phi$ is parallel.
\item[2)] $\Phi\colon M\to\mathbb{S}^n\subset\mathbb{R}^{n+1}\subset\mathbb{C}^{n+1}$ is a global nowhere zero section of $L_0$. Since $\Phi$ is parallel, $L_0$ is trivial and $c_1(L_0)=0$.
\item[3)] $\bar L_p = L_{-p}$.
\end{itemize}
 
By definition, $L_0\perp L_{-1} $, hence, by item $3)$ one has $L_0\perp \bar L_1$. Therefore, by Proposition~\ref{holo_prop}, the form $\mathcal H_1$ is a holomorphic bilinear form on $L_1$, i.e. an element of $H^0(M,(L_1^*)^2)$. Consider two cases.

{\bf Case 1.} $\mathcal H_1\not\equiv 0$. Then $h^0((L_1^*)^2)>0$ and as a result $c_1(L_1) \leqslant 0$.

{\bf Case 2.} $\mathcal H_1\equiv 0$. We claim that in this case $L_1\perp\bar L_2$, i.e. $\mathcal H_2$ is an element of $H^0(M,(L_2^*)^2)$. Indeed, let $\xi$ be a local section of $L_1$. Then, since $\mathcal H_1\equiv 0$, one has $\xi\cdot\xi = 0$. Application of $\partial_z$ to both sides of the equality yields
\begin{equation}
\label{perp_eq}
0 = 2(\partial_z\xi)\cdot\xi = 2(\partial_1(\partial_{ z}\otimes\xi)+\nabla^{L_1}_z\xi)\cdot\xi = 2\partial_1(\partial_{ z}\otimes\xi)\cdot\xi,
\end{equation} 
where in the last equality we once again used that $\mathcal H_1\equiv 0$. Since $\partial_1\xi$ is a local section of $L_2$, this concludes the proof of the claim.

Now we repeat the process for $\mathcal H_2$: either it is a non-zero element of $H^0(M,(L_2^*)^2)$ or $\mathcal H_2\equiv 0$. The following proposition makes this inductive process rigorous.
\begin{proposition}
\label{induction_prop}
Suppose that $\mathcal H_i\equiv 0$ for $i=1,\ldots,p$. Then the bundles $L_{-p},\ldots,L_p,L_{p+1}$ are mutually orthogonal.
\end{proposition}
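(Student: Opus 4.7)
The plan is induction on $p$. The base case $p=1$ is exactly the computation carried out as Case~2 preceding the proposition. For the inductive step, I would first reduce the claim to a collection of bilinear vanishings: within each of the one-sided subsequences $\{L_0,L_1,\ldots,L_{p+1}\}$ and $\{L_0,L_{-1},\ldots,L_{-p}\}$ the Hermitian mutual orthogonality is a classical property of harmonic sequences, and since $L_0$ is spanned by the real section $\Phi$ one has $L_0=\bar L_0$, making all orthogonalities involving $L_0$ automatic. The essential new content is thus the ``cross'' Hermitian orthogonalities $L_i\perp L_{-j}$ for $i\in\{1,\ldots,p+1\}$ and $j\in\{1,\ldots,p\}$; via the dictionary $\langle\xi,\eta\rangle=\xi\cdot\bar\eta$ together with $\bar L_{-j}=L_j$, these translate into bilinear vanishings $L_i\cdot L_j\equiv 0$.

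The main tool is a differentiation identity generalising the trick of Case~2. Using the standard harmonic-sequence splitting $\partial_z L_p\subseteq L_p\oplus L_{p+1}$, for local sections $\xi\in L_a$ and $\eta\in L_c$ one computes
\begin{equation*}
\partial_z(\xi\cdot\eta)=\bigl[(\nabla^{L_a}_z\xi)\cdot\eta+\xi\cdot(\nabla^{L_c}_z\eta)\bigr]+\bigl[\partial_a(\partial_z\otimes\xi)\cdot\eta+\xi\cdot\partial_c(\partial_z\otimes\eta)\bigr],
\end{equation*}
where the first bracket lies in the pairing $L_a\cdot L_c$ and the second in $L_{a+1}\cdot L_c+L_a\cdot L_{c+1}$. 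Whenever $L_a\cdot L_c\equiv 0$ is already known, the first bracket drops out and we obtain the coupled identity $L_{a+1}\cdot L_c+L_a\cdot L_{c+1}\equiv 0$, which lets one propagate bilinear vanishings along the anti-diagonals of the pairing lattice.

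Bootstrapping from $\mathcal H_p\equiv 0$, i.e.\ $L_p\cdot L_p\equiv 0$, the identity with $a=c=p$ yields $2\,L_p\cdot L_{p+1}\equiv 0$, so $L_p\cdot L_{p+1}\equiv 0$. Running then $a=p-1,p-2,\ldots,1$, the inductively known orthogonality $L_a\cdot L_p\equiv 0$ combined with the identity gives $L_{a+1}\cdot L_p+L_a\cdot L_{p+1}\equiv 0$; since $L_{a+1}\cdot L_p\equiv 0$ by the inductive hypothesis (both indices are $\le p$), we conclude $L_a\cdot L_{p+1}\equiv 0$. Conjugation handles the analogous statements for $L_{-p}$, completing the induction. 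The principal obstacle, beyond the combinatorial bookkeeping, is the validity of the decomposition $\partial_z L_p\subseteq L_p\oplus L_{p+1}$, which is a classical consequence of harmonicity together with the one-sided Hermitian orthogonality of the sequence; a minor technical point is that $\partial_p$ may vanish at the branch points of $\psi_p$, so the bilinear identities are first derived on the regular locus and extended by continuity.
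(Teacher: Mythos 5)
Your overall strategy---translating Hermitian orthogonalities into bilinear vanishings $L_a\cdot L_c\equiv 0$, differentiating a known vanishing $\xi\cdot\eta=0$ and using $\partial_z L_a\subseteq L_a\oplus L_{a+1}$ to propagate along the lattice of pairings, all inside an induction on $p$---is precisely the mechanism of the paper's proof. However, there is a genuine gap in the reduction that starts the argument. You assert that ``within each of the one-sided subsequences $\{L_0,\ldots,L_{p+1}\}$ and $\{L_0,\ldots,L_{-p}\}$ the Hermitian mutual orthogonality is a classical property of harmonic sequences.'' This is false: only \emph{consecutive} bundles are orthogonal by construction, and the one-sided sequence of a general harmonic map to $\mathbb{CP}^n$ need not be mutually orthogonal (e.g.\ for a non-$\pm$holomorphic harmonic map to $\mathbb{CP}^1$ one has $L_2=L_0$, and the sequence is periodic). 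Mutual one-sided orthogonality is controlled by the isotropy order, which is exactly what the hypothesis $\mathcal H_i\equiv 0$ is there to deliver; it is part of what the proposition must prove, not an input. The further assertion that orthogonalities involving $L_0$ are automatic because $L_0=\bar L_0$ is likewise false: $L_0\perp L_{\pm1}$ is by definition, but $L_0\perp L_2$ is not.

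Because of this, your bootstrap is incomplete: running $a=p,p-1,\ldots,1$ only yields $L_a\cdot L_{p+1}\equiv0$ for $a\in\{1,\ldots,p\}$, i.e.\ $L_{p+1}\perp L_{-j}$ for $j\in\{1,\ldots,p\}$, which together with the automatic $L_{p+1}\perp L_p$ leaves the one-sided orthogonalities $L_{p+1}\perp L_j$ for $0\leqslant j\leqslant p-1$ unproved. The fix is simply to extend the same propagation to nonpositive $a$: for each $a\in\{-(p-1),\ldots,p\}$, the needed input $L_a\cdot L_p\equiv0$ is available from the inductive hypothesis after a conjugation (it translates to $L_{-a}\perp L_p$ with both indices in $\{-p+1,\ldots,p\}$), and the propagation then produces $L_a\cdot L_{p+1}\equiv 0$, giving $L_{p+1}\perp L_{-a}$ for all $|a|\leqslant p-1$; combined with $\mathcal H_p\equiv0$ (which handles $L_{p+1}\perp L_{\pm p}$) this completes the step. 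This is exactly what the paper does: its ``Step 2'' takes $\zeta$ a section of $L_i$ with $|i|\leqslant p-1$ ranging over \emph{both} signs, so that all the needed orthogonalities are obtained from the single differentiation identity rather than from an appeal to a nonexistent classical fact.
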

\begin{proof}
We prove it by induction on $p$. The case $p=0$ follows from the definition. Assume that the statement is true for $p-1$, let us prove it for $p$. 

By the induction step one has that $L_{-p+1},\ldots,L_{p-1}, L_p$ are mutually orthogonal. By taking conjugates, one concludes $L_{-p},L_{-p+1},\ldots,L_{p-1}$ are mutually orthogonal. Furthermore, $L_p\perp \bar L_p = L_{-p}$ is equivalent to $\mathcal H_p = 0$, therefore $L_{-p},L_{-p+1},\ldots,L_{p-1}, L_p$ are mutually orthogonal. It remains to prove that $L_{p+1}\perp L_{i}$ for $|i|\leqslant p$. We do that in two steps.
First, since $\mathcal H_p=0$, repeating the argument of equality~\eqref{perp_eq} for $x\in L_{p}$, one obtains $L_{-p-1}=\bar L_{p+1}\perp L_{p}$. 
At the same time, by definition $L_{-p-1}\perp L_{-p}$. 
Taking conjugates yields $L_{p+1}\perp L_{-p}$ and $L_{p+1}\perp L_{p}$.
Second, let us show $L_{p+1}\perp L_i$ for $|i|\leqslant (p-1)$. Let $\xi$ and $\zeta$ be local sections of $L_p$ and $L_i$ respectively, where $|i|\leqslant p-1$. By induction step, one has $\xi\cdot\zeta = 0$. Similarly to~\eqref{perp_eq}, application of $\partial_z$ yields
$$
0 = \partial_z\xi\cdot\zeta + \xi\cdot\partial_z\zeta = \partial_p(\partial_{z}\otimes\xi)\cdot\zeta,
$$
where we used that $\partial_z\zeta$ is local section of $L_i\oplus L_{i+1}\perp L_{p}$ by the induction step. Since $\partial_p(\partial_{\bar z}\otimes\xi)$ is section of $L_{p+1}$, this completes the proof.
\end{proof}

Thus, the following process is well-defined. Take the largest $q$ such that $\mathcal H_i = 0$ for $1\leqslant i\leqslant q-1$ and $\mathcal H_q\not\equiv 0$. By Proposition~\ref{induction_prop} either such $q$ exists and satisfies $2q\leqslant n+1$ or $\mathcal H_i\equiv 0$ for all $i\geqslant 1$. As a result, we have one of the following cases.
\begin{itemize}
\item[(A)] There exists $q>0$ such that $\mathcal H_i = 0$ for $1\leqslant i\leqslant q-1$ and $\mathcal H_q\not\equiv 0$. Then, by Proposition~\ref{induction_prop} one has $\bar L_q\perp L_{q-1}$ and Proposition~\ref{holo_prop} implies that $\mathcal H_q\in H^0(M,(L_q^*)^2)$ is a non-zero section. Therefore, $c_1(L_q)\leqslant 0$ and $2q\leqslant n+1$.
\item[(B)] For all $p>0$, one has $\mathcal H_p\equiv 0$. Then by Proposition~\ref{induction_prop} one has that the harmonic sequence $\{L_i\}$ terminates at the right and at the left and $n=2m$. In particular, the harmonic sequence $\{L_i\}$ coincides with the Frenet frame of a holomorphic curve $\psi_{-m}$. In this case $\Phi$ and $\psi$ are both referred to as {\em totally isotropic}.
\end{itemize}

We treat these cases separately.
\subsection{Case (A)} 
\label{CaseA:sec}
Let us sum up the identities~\eqref{ramification} for $p=0,\ldots,q-1$. Keeping in mind that $r(\partial_0) = |B|$ and $c_1(L_0) = 0$ one obtains
$$
\deg(B)\leqslant \deg(B) + \sum_{p=1}^{p=q-1}r(\partial_p) = q(2\gamma-2) + c_1(L_q)\leqslant q(2\gamma-2)\leqslant (n+1)(\gamma-1). 
$$

\begin{proof}[Proof of Proposition~\ref{nonconformal:prop}] Recall that $\Phi$ is conformal iff $\mathcal H_1\equiv 0$. If $\Phi$ is not conformal, then $\mathcal H_1\not\equiv 0$ and, as a result, the previous inequality holds with $q=1$.
\end{proof}

\subsection{Case (B)} In this case the harmonic sequence coincides with the Frenet frame of the holomorphic curve $\psi_{-m}$ which is referred to as {\em directrix} of $\Phi$. One concludes that $n=2m$ and the harmonic sequence terminates with $L_m$ at the right and with $L_{-m}$ at the left.  The harmonic map $\Phi$ in this case is called {\em totally isotropic}. They were first studied in detail in~\cite{Barbosa}. In contrast to case (A) there is no upper bound for $\deg(B)$ in purely topological terms. 
Indeed, all harmonic maps with domain $M\cong \mathbb{S}^2$ are totally isotropic. Let $\pi_k\colon\mathbb{S}^2\to\mathbb{S}^2$ be a branched cover given by $z\mapsto z^k$. Then the maps $\Phi_k = \pi_k\circ \Phi$ are harmonic and have arbitrary large total branching order.
 
However, in this example the energy $E_g(\Phi_k)$ given by
$$
E_g(\Phi_k) = \frac{1}{2}\int_{\mathbb{S}^2}|\nabla\Phi_k|^2_g\,dv_g,
$$
grows with $k$. In fact, one has the following proposition.

\begin{proposition}
Suppose that $\Phi\colon (M,g)\to\mathbb{S}^{2m}$ is a linearly full totally isotropic harmonic map. Then one has
$$
\area_{(\Phi^*g_{\mathbb{S}^{2m}})}(M) = E_g(\Phi) = 2\pi\left(m(m+1)(1-\gamma) + \sum_{j=0}^{m-1}(m-j)r(\partial_j)\right).
$$
\end{proposition}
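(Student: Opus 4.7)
The plan is to establish the second equality by a telescoping argument along the harmonic sequence. Set $a_p := \int_M |\partial_p|^2\,dv_g$ and $c_p := c_1(L_p)$. The first equality is just conformality, which is built into total isotropy: the condition $\mathcal{H}_1 \equiv 0$ is exactly the vanishing of the Hopf differential $\partial_z\Phi\cdot\partial_z\Phi$, so $\Phi$ is conformal. A short computation in isothermal coordinates $g = e^{2u}|dz|^2$, using that $\langle\partial_z\Phi,\Phi\rangle = 0$ follows from differentiating $|\Phi|^2 = 1$, shows $a_0 = E_g(\Phi)$; since the harmonic sequence terminates at $L_m$ on the right, $\partial_m \equiv 0$ and hence $a_m = 0$.

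The heart of the argument is the Pl\"ucker-type identity
\[ a_p - a_{p-1} = -2\pi\, c_p, \qquad 1 \leqslant p \leqslant m. \]
To prove it I would work in local unit frames $u_q$ of each $L_q$, and decompose
\[ \partial_z u_p = \alpha_p u_p + \beta_p u_{p+1}, \qquad \partial_{\bar z}u_p = -\bar\alpha_p u_p + \delta_p u_{p-1}, \]
which is possible by the inclusions $\partial_z L_p \subset L_p \oplus L_{p+1}$ and $\partial_{\bar z}L_p \subset L_p \oplus L_{p-1}$ built into the construction of the harmonic sequence. Differentiating the orthogonality $\langle u_{p-1}, u_p\rangle = 0$ in the $\partial_z$-direction yields the compatibility $\delta_p = -\overline{\beta_{p-1}}$. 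The induced Chern connection on $L_p$ has local form $\theta_p = \alpha_p\,dz - \bar\alpha_p\,d\bar z$; equating the $u_p$-components in the identity $\partial_{\bar z}\partial_z u_p = \partial_z\partial_{\bar z}u_p$ (which holds since $u_p$ is $\mathbb{C}^{n+1}$-valued) gives
\[ \partial_{\bar z}\alpha_p + \partial_z\bar\alpha_p = |\beta_p|^2 - |\beta_{p-1}|^2, \]
whence $\Omega_{L_p} = d\theta_p = -(|\beta_p|^2 - |\beta_{p-1}|^2)\,dz\wedge d\bar z$. Since a direct check gives $|\partial_p|^2\,dv_g = |\beta_p|^2\, i\, dz\wedge d\bar z$, integrating $(i/2\pi)\Omega_{L_p}$ over $M$ yields the Pl\"ucker identity.

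To assemble, the ramification formula~\eqref{ramification} together with $c_0 = 0$ gives recursively $c_p = \sum_{j=0}^{p-1} r(\partial_j) + 2p(1-\gamma)$. Telescoping the Pl\"ucker identity from $p=1$ to $m$ and using $a_m = 0$,
\[ E_g(\Phi) = a_0 = 2\pi\sum_{p=1}^m c_p = 2\pi\left(m(m+1)(1-\gamma) + \sum_{j=0}^{m-1}(m-j)\,r(\partial_j)\right), \]
after interchanging the order of summation and using $\sum_{p=1}^m 2p = m(m+1)$. The main technical obstacle is the local curvature computation in the middle paragraph — the algebraic bookkeeping with the frames and, in particular, the derivation of the identity $\partial_{\bar z}\alpha_p + \partial_z\bar\alpha_p = |\beta_p|^2 - |\beta_{p-1}|^2$; the surrounding steps are then routine.
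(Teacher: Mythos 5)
Your proof is correct, and it takes a genuinely different route from the paper's. The paper derives $E_g(\Phi) = 2\pi\sum_{p=1}^m c_1(L_p)$ by passing to the holomorphic directrix $\psi_{-m}\colon M\to\mathbb{CP}^{2m}$ and citing the Pl\"ucker/osculating-degree formulas of Bolton--Jensen--Rigoli--Woodward, namely $c_1(L_{-m+k}) = \delta_{k-1}-\delta_k$ and $E_g(\psi_{-m+k}) = \pi(\delta_{k-1}+\delta_k)$ for the osculating degrees $\delta_k$, combined with the global triviality $\sum c_1(L_i)=0$. You instead bypass the directrix entirely and prove the Pl\"ucker-type curvature identity $a_p - a_{p-1} = -2\pi\,c_1(L_p)$ directly from the harmonic sequence by a local unitary-frame computation, then telescope using $c_1(L_0)=0$ and $a_m=0$; both routes then feed the resulting $\sum_{p=1}^m c_1(L_p)$ into the ramification formula~\eqref{ramification} identically. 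Your computation checks out: equating the $u_p$-components of $\partial_{\bar z}\partial_z u_p = \partial_z\partial_{\bar z}u_p$ gives $\partial_{\bar z}\alpha_p+\partial_z\bar\alpha_p = \delta_p\beta_{p-1}-\beta_p\delta_{p+1}$, which reduces to $|\beta_p|^2-|\beta_{p-1}|^2$ via the compatibility $\delta_p=-\bar\beta_{p-1}$, and the normalizations $a_0 = E_g(\Phi)$ and $|\partial_p|^2\,dv_g = |\beta_p|^2\,i\,dz\wedge d\bar z$ are consistent. The trade-off is that your argument is more self-contained and does not use the full Frenet structure of the directrix, while the paper's is shorter at the cost of importing the osculating-curve machinery from the literature; the paper itself notes it could not find an exact reference and sketches the proof, so an explicit derivation along your lines would have been a reasonable alternative.
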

\begin{proof}
We note that totally isotropic harmonic maps are automatically conformal, i.e. $\Phi\colon (M,\Phi^*g_{\mathbb{S}^{2m}})\to\mathbb{S}^{2m}$ is a branched minimal immersion. Indeed, conformality is equivalent to $\mathcal H_1\equiv 0$.

This proposition easily follows from the known results on totally isotropic minimal immersions. Unfortunately, we were not able to find the exact reference, so we sketch a proof here. We follow the paper~\cite{BJRW} and remark that all computations in that paper are valid for any totally isotropic immersion, not necessarily for the ones with domain diffeomorphic to $\mathbb{S}^2$. The only correction one should make is that instead of $c_1(T^{(1,0)}M) = -2$ one has $c_1(T^{(1,0)}M) = 2\gamma-2$. Similarly, one could follow~\cite[Section 6]{Barbosa}.

Recall that $\psi_{-m}$ is a holomorphic map $\psi_{-m}\colon M\to\mathbb{CP}^{2m}$. Let $\delta_k$ be the degree of its $k$-th osculating curve $\sigma_k\colon M\to \mathbb{CP}^{{2m+1\choose k+1} -1}$. Note that $\delta_0 = \deg(\psi_{-m})=-c_1(L_{-m})$. Moreover, computations in~\cite[Section 3]{BJRW} show that
\begin{equation}
\label{osculating}
\begin{split}
c_1(L_{-m+k}) &= \delta_{k-1}-\delta_k, \\
E_g(\psi_{-m+k}) &= \pi(\delta_{k-1}+\delta_k),
\end{split}
\end{equation}
where we assume $\delta_{-1}=0$.
Recall that that $c_1(L_0) = 0$ which implies $E_g(\Phi) = E_g(\psi_0) = 2\pi\delta_m$. Summing up equalities~\eqref{osculating} for $k=0,\ldots,m-1$ one obtains
$$
\delta_m = -\sum_{p=-m}^{0} c_1(L_p) = \sum_{p=1}^m c_1(L_p),
$$
where we used that $\sum c_1(L_i) = c_1(\oplus L_i) = 0$.
Finally, we apply relation~\eqref{ramification} to express the right hand side of the previous equality in terms of ramification indices. Namely, summing up relations~\eqref{ramification} for $p=0,\ldots,k-1$ and taking into account that $c_1(L_0) = 0$ one obtains
$$
c_1(L_k) + k(2\gamma-2) = \sum_{p=0}^{k-1} r(\partial_p).
$$
Summing these equalities for $k=1,\ldots,m$ one obtains
$$
E(\Phi) = 2\pi\delta_m = 2\pi\left(\sum_{k=1}^m\sum_{p=0}^{k-1} r(\partial_p) + 2(1-\gamma)\sum_{k=1}^mk\right).
$$
Rearranging the terms yields the proposition.
\end{proof}

Recalling that $r(\partial_0) = \deg(B)$ one obtains the following corollary.
\begin{corollary} Let $\Phi\colon (M,g)\to\mathbb{S}^{2m}$ be a linearly full totally isotropic harmonic map. Then the total branching $\deg(B)$ satisfies
$$
\deg(B)\leqslant \frac{1}{2\pi m}E_g(\Phi) +(m+1)(\gamma-1).
$$ 
\end{corollary}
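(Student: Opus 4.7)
The plan is to derive the corollary as an immediate consequence of the energy identity in the preceding proposition. Concretely, starting from
$$
E_g(\Phi) = 2\pi\left(m(m+1)(1-\gamma) + \sum_{j=0}^{m-1}(m-j)r(\partial_j)\right),
$$
I would divide by $2\pi m$ and rearrange to obtain
$$
\frac{1}{2\pi m}E_g(\Phi) + (m+1)(\gamma-1) = \frac{1}{m}\sum_{j=0}^{m-1}(m-j)\,r(\partial_j).
$$
The coefficient of $r(\partial_0)$ on the right-hand side is $m/m = 1$, so the right-hand side equals $r(\partial_0) + \frac{1}{m}\sum_{j=1}^{m-1}(m-j)\,r(\partial_j)$.

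The next step is to invoke the non-negativity of ramification indices: each $r(\partial_j)\geqslant 0$ by definition (it is the degree of a zero divisor of a holomorphic section), and the coefficients $m-j$ are strictly positive for $j=1,\ldots,m-1$. Therefore the extra sum is non-negative, and dropping it yields
$$
\deg(B) = r(\partial_0) \leqslant \frac{1}{2\pi m}E_g(\Phi) + (m+1)(\gamma-1),
$$
which is the desired inequality.

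I do not expect any genuine obstacle here: the only substantive content has already been packaged in the preceding proposition (the energy formula in terms of the Frenet data of the directrix), and the corollary is a purely arithmetic rearrangement that isolates the single term $r(\partial_0)$ corresponding to the branching of $\Phi$ itself from the remaining non-negative ramification contributions of the higher-order maps $\psi_j$ in the harmonic sequence. The only point worth highlighting in the write-up is that the identification $r(\partial_0) = \deg(B)$ uses property (1) of $\partial_p$ recorded in the previous subsection, namely that the singular points of $\partial_0$ are exactly the branch points of $\psi_0 = \psi$, counted with their orders.
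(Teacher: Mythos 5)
Your proposal is correct and follows exactly the same route as the paper: divide the energy identity by $2\pi m$, isolate the $j=0$ term (which equals $r(\partial_0)=\deg(B)$), and discard the remaining non-negative ramification terms. The paper leaves this arithmetic implicit, stating only that the corollary follows upon recalling $r(\partial_0)=\deg(B)$; your write-up simply makes the same computation explicit.
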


This completes the proof of Theorem~\ref{Branching_th} for orientable surfaces $M$. Assume $M$ is non-orientable and $\Phi\colon (M,g)\to\mathbb{S}^n$ is a harmonic map with total branching $|B|$. Let $\pi\colon \widetilde M\to M$ be an orientable double cover, $\widetilde\Phi = \Phi\circ\pi$ be a harmonic map $(\widetilde M,\pi^* g)\to\mathbb{S}^n$ with total branching order $\deg(\widetilde B)$. Applying previous arguments to $\widetilde \Phi$, noting that $\deg(\widetilde B) = 2\deg (B)$, $\chi(\widetilde M) = 2\chi(M)$ and $E_{\pi^*g}(\widetilde \Phi) = 2E_g(\Phi)$ completes the proof in the non-orientable case.

\subsection{Proof of Proposition~\ref{extremalbranching_prop}} 
\label{extremalbranching_sec}
Recall that Proposition~\ref{extremalbranching_prop} states that the total number of conical singularities of any $\bar\lambda_k$ conformally extremal metric is bounded by $C(\gamma+1)(\gamma+k)$ for some universal constant $C$.

\begin{proof}
By Theorem~\ref{extremal_th} there exists a harmonic map $\Psi\colon(M,h)\to\mathbb{S}^n$, whose components are the $k$-th eigenfunctions. According to the discussion before Proposition~\ref{extremalbranching_prop}, it is sufficient to bound the total branching of $\Psi$. By the results of~\cite{Nadirashvili_mult} the multiplicity of $k$-th eigenvalue is bounded by a linear function of $\gamma$ and $k$. Since the components of $\Psi$ are the $k$-th eigenfunctions of $(M,h)$ one has $n\leqslant C'(\gamma+k)$. This completes the proof if $\Psi$ is not totally isotropic.

If $\Psi$ is totally isotropic, then after rescaling of the metric $h$ one can assume that $\lambda_k(M,h) = 2$. For such metric $g$ one has $E_h(\Psi) = \area_h(M)$. Therefore, by Remark~\ref{Korevaar_remark} 
$$
\Lambda_k(M,h) = 2E_h(\Psi)\leqslant Ck(\gamma + 1).
$$
Combining this inequality with Theorem~\ref{Branching_th} completes the proof.
\end{proof}

%
%
%
%
%
%
%
%

\end{document}